\def\section{\@startsection{section}{1}%
	\z@{.7\linespacing\@plus\linespacing}{.5\linespacing}%
	{\bfseries
		\centering
}}
\def\@secnumfont{\bfseries}
\newtheorem{theorem}{Theorem}[section]
\newtheorem*{theorem*}{Theorem}
\newcommand\dhookrightarrow{\mathrel{%
  \ensurestackMath{\stackanchor[.1ex]{\hookrightarrow}{\hookrightarrow}}
}}
\newaliascnt{lemma}{theorem}
\newtheorem{lemma}[lemma]{Lemma}
\newaliascnt{proposition}{theorem}
\newaliascnt{assumption}{theorem}
\newtheorem{assumption}[assumption]{Assumption}
\newaliascnt{auxiliary}{theorem}
\newtheorem{auxiliary}[auxiliary]{Auxiliary Result}
\newaliascnt{corollary}{theorem}
\newtheorem{corollary}[corollary]{Corollary}
\newaliascnt{definition}{theorem}
\newtheorem{definition}[definition]{Definition}
\newaliascnt{example}{theorem}
\newaliascnt{remark}{theorem}
\newtheorem{remark}[remark]{Remark}
\newaliascnt{hypothesis}{theorem}
\newaliascnt{property}{theorem}
\DeclareMathOperator\supp{supp}
\newcommand{\Addresses}{{
		\footnote{
				\footnotesize
\noindent \textsuperscript{1}School of Mathematics, Indian Institute of Science Education and Research, Trivandrum (IISER-TVM),
			Maruthamala PO, Vithura, Thiruvananthapuram, Kerala, 695 551, INDIA  \par\nopagebreak \noindent
			\textit{e-mail:} \texttt{ritabrata20@iisertvm.ac.in}
			
			\noindent \textsuperscript{*}Corresponding author.

			\medskip\noindent
			{\bf Acknowledgments:} 
 			The author was supported by Prime Minister's Research Fellowship when this work was carried out. The author also like to thank Dr. Dhanya R. for valuable input and discussions on this problem.
			
}}}
\begin{document}
\title[Symmetry and Monotonicity]{ Symmetry and Monotonicity Property of a Solution of (p,q) Laplace Equation with Singular Term 	\Addresses	}
	\author[ Ritabrata Jana ]
	{ Ritabrata Jana\textsuperscript{1}} 
\maketitle
\begin{abstract}
This paper examines the behavior of a positive solution $u\in C^{1,\alpha}(\Bar{\Omega})$ of the $(p,q)$ Laplace equation with a singular term and zero Dirichlet boundary condition. Specifically, we consider the equation:
\begin{equation*}
\begin{aligned}
-div(|\nabla u|^{p-2}\nabla u+ a(x) |\nabla u|^{q-2}\nabla u) &= \frac{g(x)}{u^\delta}+h(x)f(u) \, &\text{in} \thinspace B_R(x_0), \quad
u & =0 \ &\text{on} \ \partial B_R(x_0).
\end{aligned}
\end{equation*}
We assume that $0<\delta<1$, $1<p\leq q<\infty$, and $f$ is a $C^1(\mathbb{R})$ nondecreasing function. Our analysis uses the moving plane method to investigate the symmetry and monotonicity properties of $u$. Additionally, we establish a strong comparison principle for solutions of the $(p,q)$ Laplace equation with radial symmetry under the assumptions that $1<p\leq q\leq 2$ and $f\equiv1$.
\end{abstract}
\keywords{\textit{Key words:} Moving Plane Method, Symmetry Of Solutions, Singular Quasilinear Equations}
\\
MSC(2020): 35B06, 35B51, 35J92
\section{Introduction}
In  this work, we analyse the behaviour of the solutions of the Dirichlet problem for quasilinear equation involving a singular nonlinearity of the form 
\begin{equation} \label{1.1}
\left\{
\begin{aligned}
-div(|\nabla u|^{p-2}\nabla u+ a(x) |\nabla u|^{q-2}\nabla u)  &= \frac{g(x)}{u^\delta}+h(x)f(u) \, &\text{in} \, \Omega\\
u & >0 \, &\text{in}  \, \Omega \\
u & =0 \ &\text{on} \ \partial \Omega
\end{aligned}   
\right.
\end{equation}
where $0<\delta<1$ and $1< p \leq q <\infty$.  Throughout this paper, we assume the domain to be $\Omega \equiv B_R(x_0)$ for some $x_0\in \mathbb{R}^N$, $R>0$ and $B_R(x_0)$ is defined as 
\begin{equation*}
    \begin{aligned}
        B_R(x_0) = \{x\in \mathbb{R}^N : |x-x_0|<R \}.
    \end{aligned}
\end{equation*}
We investigate a generalized differential operator that is denoted as the $(p,q)$ Laplacian, given by $-div(|\nabla u|^{p-2}\nabla u+ a(x) |\nabla u|^{q-2}\nabla u)$, where $a(x)\in C(\Bar{\Omega})$.  This operator has important applications in several fields, such as biophysics \cite{Fif79}, plasma physics \cite{Wil87}, reaction-diffusion equations \cite{Ari94}, nonlinear elasticity \cite{Zhi86}, and modelling of elementary particles \cite{BFP98,Der64}. To gain a more comprehensive understanding of the applications of the operator mentioned, readers can refer to \cite{MM18} and the references therein.
\\
As described by Marcellini \cite{Mar91}, the energy functional associated with equations driven by the $(p,q)$ Laplacian has non-standard growth conditions of $(p,q)$ type and is of the form 
$I(u) =
\int_\Omega h(x, \nabla u(x)) dx$, where the energy density satisfies $|\xi|^p \leq |h(x,\xi)|\leq|\xi |^{q}+1$. These types of functionals find applications in the field of nonlinear elasticity, and are particularly relevant to homogenisation theory. One specific functional that belongs to this category is the double phase functional, defined as $u \mapsto (|\nabla u|^p + a(x)|\nabla u|^q )dx$. The modulating coefficient $a(x)$ controls the geometry of the composite made by two differential materials with hardening exponents $p$ and $q$. The functional is studied by Zhikov\cite{Zhi86}, Mingione et al.\cite{BCM18, CM16}, and Radulescu et al.  \cite{PRR18, PRR19} and many more.
The corpus of literature pertaining to the theoretical foundations of the existence, and regularity of nonhomogeneous quasilinear $(p,q)$ Laplace equations
of the form 
\begin{equation}\label{geneq}
    \begin{aligned}
   -div(|\nabla u|^{p-2}\nabla u+|\nabla u|^{q-2}\nabla u)  =f(x,u) \thickspace \text{in} \thinspace \Omega, \quad u=0 \thickspace \text{on} \thinspace \partial\Omega .     
    \end{aligned}
\end{equation}
is progressively expanding. The interior and boundary regularity theory for the equation \eqref{geneq} where $\Omega$ is a bounded domain and $f(x,u)\equiv f(x) \in L^{\infty}(\Omega)$, is well established after the celebrated work of Lieberman \cite{Lib88, Lib91} and the parallels that concerning $p$ Laplacian. When the domain is  $\mathbb{R}^N$, He-Li \cite{HL08} established the local Holder regularity result when $f(x)$ is locally uniformly bounded. Kumar et al. \cite{KRV20} proved the interior $C^{1,\alpha}$ regularity and uniform  boundedness of  each positive solution to the problem \eqref{geneq} when $f(x,u) \equiv \lambda u^{-\delta}+u^{r-1}$ where $0<\delta<1$ and $1<q<p<r \leq p^{*}$. Recently, Giacomoni et al.\cite{GKS21}
demonstrated that the solution to problem \eqref{1.1} exhibits $C^{1,\alpha}$ boundary regularity, provided that a particular condition is satisfied. This condition can be expressed succinctly as follows:
\begin{assumption}\label{h1}
For $\beta\geq 0$ and $\beta + \delta <1$; $f,g,h,a$ satisfy the following:
\begin{enumerate} 
    \item[($C_g$)]   For two non negative constants $C_1$,$C_2$, and  $\Omega_\rho:=\{x \in \Bar{\Omega}: d(x)< \rho \}$ for $\rho>0$; $g \in L_{loc}^\infty(\Omega)$ satisfies
    \begin{equation*}
        \begin{aligned}
        \frac{C_1}{d(x)^\beta} \leq g(x) \leq \frac{C_2}{d(x)^\beta} \hspace{1cm} \text{when}\ x\in \Omega_\rho
        \end{aligned}
    \end{equation*}
    \item[($C_{fh}$)] Let $h$ and $f$ satisfy  $h(x)f(t) \geq 0 \, \forall \, (x,t) \in \Bar{\Omega} \times \mathbb{R}^+$, with $f(0)=0$ then there exists a non zero constant $C_3$ and $C_4$ such that $h(x)\leq C_3,$ $f(t) \leq C_4(1+t)^r$ for which $p-1< r \leq p^*-1$ if $p < n$ otherwise $p-1<r < \infty$.  
    \item[($C_a$)] Let $a(x)$ satisfies $0 \leq a(x) \in W^{1,\infty}(\Omega) \cap C(\Bar{\Omega})$.
\end{enumerate}
\end{assumption}
For a more comprehensive understanding of the regularity theory for solutions to equations like \eqref{geneq}, readers are referred to the articles \cite{CM16, Lib91}.  In light of the latest advancements in the existence and regularity theories for the $(p,q)$ Laplace equation, we focus on the qualitative characteristics of the solutions. In this work, we study the symmetry and monotonicity properties of the solutions $u \in C^{1,\alpha}(\Bar{\Omega})$ to the problem \eqref{1.1}. The boundary regularity is guaranteed by \cite{GKS21} thanks to Assumption \ref{h1}. In this  article, we use the famous moving plane method introduced by Alexandrov\cite{Ale62} and Serrin\cite{Ser71} and subsequently improved in \cite{BN91,GNN79}. 
A modified version of the moving plane method is exploited to investigate the symmetry and monotonicity of the positive solutions of the Dirichlet problem with singular nonlinearity for the semilinear case in \cite{CES19,CMS17,Tro16}, and for quasilinear setting in \cite{ES20}. Some indispensable analytical tools for moving plane methods, such as Hopf's lemma \cite{PW21}, strong comparison principle (SCP) \cite{BE21}, are
recently established for $(p,q)$ Laplacian. 
\par
The novelty of our study is to surmount the challenge of proving radial symmetry of solutions in the presence of non-homogeneity of $(p,q)$ Laplacian, as well as non-constant functions $f, g, h$, and $a$.  In the following sections, we elaborate on the technical challenges associated with this issue and provide solutions. We must emphasize that even for the homogeneous $p$-Laplace operator, the thorough investigation of radial symmetry of solutions is primarily conducted when $g$ and $h$ are constants, due to their critical role in the underlying equations. Nonetheless, to ensure the smooth operation of the moving plane method, we impose some conventional conditions on $f, g, h,$ and $a$, which are presented in Assumption \ref{h2}.
\begin{assumption} \label{h2}
For some $c>0$, $f,g,h$ and $a$ satisfy the following:
\begin{enumerate}
\item [($H_f$)] $f: \mathbb{R} \rightarrow \mathbb{R}$ is a $C^1$ nondecreasing function. 
 \item[($H_g$)]  $g: \Omega \rightarrow \mathbb{R}$ is a $C^1(\Bar{\Omega})$ radially decreasing function such that $g(x) \geq c >0$.
  \item[($H_h$)] $h: \Omega \rightarrow \mathbb{R}$ is a $C^1(\Bar{\Omega})$ radially decreasing function such that $h(x) \geq c >0$.
  \item[($H_a$)] $a: \Omega \rightarrow \mathbb{R}$ is a $C^1(\Bar{\Omega})$ radially increasing nonnegative function.
\end{enumerate}
\end{assumption}
In this work, we initially present the pertinent variations of Hopf's lemma and comparison principles for the specific problem under consideration. We subsequently conduct a thorough investigation of the critical set of solutions, denoted by $Z_u$, which is defined as follows:
\begin{definition}
Suppose $u \in C^{1,\alpha}(\Bar{\Omega})$ is a weak solution of \eqref{1.1}. Then $Z_u:=\{x \in \Omega \ | \ Du(x)=0\}$ is known as the critical set of $u$.
\end{definition}
The critical set $Z_u$ plays a vital role in the moving plane method, especially when dealing with nonlinear operators like the $p$-Laplacian. Our primary objective is to establish that $Z_u$ has measure zero and that $\Omega\setminus Z_u$ is connected. To achieve this, we take advantage of the $C^1$ regularities of $f$, $g$, $h$, and $a$. Next, we employ the moving plane technique to attain the symmetry characteristic of solutions and, we state our main result as follows:
\begin{theorem} \label{main}Let $1<p\leq q<\infty$ and $u \in C^{1,\alpha}(\overline{B_R}(x_0))$  solves (\ref{1.1}). Assume that $f,g,h$ and $a$ satisfy Assumption \ref{h1} and Assumption \ref{h2}.  Then the solution $u$ is radial and radially decreasing in ${B_R}(x_0).$ 
\end{theorem}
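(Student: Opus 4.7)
Assume without loss of generality $x_0=0$ and fix direction $e_1$. For $\lambda\in(-R,0]$ put $\Sigma_\lambda=\{x\in B_R:x_1<\lambda\}$, denote by $x^\lambda=(2\lambda-x_1,x_2,\dots,x_N)$ the reflection across $T_\lambda=\{x_1=\lambda\}$, and set $u_\lambda(x)=u(x^\lambda)$. Because $\lambda\leq 0$, every $x\in\Sigma_\lambda$ satisfies $|x^\lambda|\leq|x|$, so Assumption~\ref{h2} gives the crucial pointwise inequalities $g(x^\lambda)\geq g(x)$, $h(x^\lambda)\geq h(x)$, $a(x^\lambda)\leq a(x)$. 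The target is $u\leq u_\lambda$ on $\Sigma_\lambda$ for every $\lambda\leq 0$; rotating the direction and running the mirrored argument from the opposite side then forces radial symmetry, while Hopf's lemma at the equator of each sphere delivers strict radial monotonicity.

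\textbf{Weak comparison (start of the method).} I would test the difference of the weak formulations for $u$ and $u_\lambda$ against $(u-u_\lambda)^+\chi_{\Sigma_\lambda}$, an admissible test function since $u=u_\lambda$ on $T_\lambda$ and $u\leq u_\lambda$ on $\partial B_R\cap\overline{\Sigma_\lambda}$. On $\{u>u_\lambda\}\cap\Sigma_\lambda$, the singular part of the right-hand side is \emph{favorably signed},
\[
\frac{g(x)}{u^\delta}-\frac{g(x^\lambda)}{u_\lambda^\delta}=\frac{g(x)-g(x^\lambda)}{u^\delta}+g(x^\lambda)\Bigl(\frac{1}{u^\delta}-\frac{1}{u_\lambda^\delta}\Bigr)\leq 0,
\]
and the regular part satisfies $h(x)f(u)-h(x^\lambda)f(u_\lambda)\leq L\,h(x^\lambda)(u-u_\lambda)^+$, where $L$ is the Lipschitz constant of $f$ on $[0,\|u\|_\infty]$. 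On the principal side, standard monotonicity of $\xi\mapsto|\xi|^{s-2}\xi$ and the sign $a(x)\geq a(x^\lambda)\geq 0$ produce a coercive lower bound of the form $c\int_{\Sigma_\lambda}|\nabla(u-u_\lambda)^+|^p\,dx$, while the mismatch term $(a(x)-a(x^\lambda))|\nabla u_\lambda|^{q-2}\nabla u_\lambda\cdot\nabla(u-u_\lambda)^+$ is controlled using $|x-x^\lambda|\leq 2|\lambda-x_1|$, the $C^{1,\alpha}$ bound on $\nabla u$, and Young's inequality. For $\lambda$ sufficiently close to $-R$ the Poincaré inequality on the thin cap $\Sigma_\lambda$ absorbs the lower-order contributions and forces $(u-u_\lambda)^+\equiv 0$.

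\textbf{Continuation and conclusion.} Set $\lambda_0=\sup\{\lambda\in(-R,0]:u\leq u_\mu \text{ in }\Sigma_\mu\text{ for every }\mu\leq\lambda\}$; by continuity $u\leq u_{\lambda_0}$ on $\Sigma_{\lambda_0}$. Suppose for contradiction $\lambda_0<0$. Applied on each connected component of $\Sigma_{\lambda_0}\setminus Z_u$, the strong comparison principle for the $(p,q)$ operator \cite{BE21} gives the strict inequality $u<u_{\lambda_0}$ (the alternative $u\equiv u_{\lambda_0}$ would force $u=0$ on the reflected cap, contradicting positivity). Since $|Z_u|=0$ and $\Omega\setminus Z_u$ is connected (established earlier in the paper), I can choose a compact $K\Subset\Sigma_{\lambda_0}\setminus Z_u$ on which $u_{\lambda_0}-u\geq\eta>0$ and such that $|\Sigma_{\lambda_0+\varepsilon}\setminus K|$ is arbitrarily small. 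Re-applying the weak comparison estimate on the thin complement yields $u\leq u_{\lambda_0+\varepsilon}$ in $\Sigma_{\lambda_0+\varepsilon}$, contradicting the maximality of $\lambda_0$. Thus $\lambda_0=0$, the mirrored argument provides the reverse inequality, and symmetry in $x_1$ follows; arbitrariness of direction gives radial symmetry, and Hopf's lemma \cite{PW21} upgrades this to strict radial monotonicity.

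\textbf{Main obstacle.} The delicate step is the weak comparison inequality, which has to absorb three interacting nonstandard features at once: the $(p,q)$ non-homogeneity of the divergence operator, the reflection mismatch $a(x)-a(x^\lambda)$ in the $q$-coefficient, and the singular right-hand side $g(x)/u^\delta$. The singular term is in fact benign once the monotonicity of $g$ is exploited, and the $a$-mismatch is absorbable by Young's inequality against the $p$-coercive term; the real challenge is obtaining constants that remain uniform as $\lambda\uparrow\lambda_0$ and as one crosses $Z_u$, especially in the subquadratic range $1<p\leq q<2$, where the principal monotonicity degenerates and the weighted-Sobolev machinery developed for singular quasilinear problems in \cite{CES19,CMS17,ES20} must be adapted to the present $(p,q)$ setting.
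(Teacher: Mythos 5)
Your overall strategy coincides with the paper's: moving planes in a fixed direction, initialization near $\partial B_R$, the dichotomy at $\lambda_0$ resolved by the strong comparison principle on $\Omega\setminus Z_u$ together with $|Z_u|=0$ and the connectedness of $\Omega\setminus Z_u$, and a narrow-domain weak comparison principle to push past $\lambda_0$. The paper initializes via Hopf's lemma (Theorem \ref{hpfe} gives $\frac{\partial u}{\partial x_1}>0$ on a full boundary neighbourhood, hence $u<u_\lambda$ for $\lambda$ close to $-R$) rather than your thin-cap Poincar\'e argument, and it treats the singular term exactly as you do (mean value theorem for $t\mapsto t^{-\delta}$ plus the sign of $g(x_\lambda)\bigl(u^{-\delta}-u_\lambda^{-\delta}\bigr)(u-u_\lambda)^+$). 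So the skeleton is the intended one.

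Two steps in your weak-comparison paragraph would fail as written. First, the ``coercive lower bound $c\int|\nabla(u-u_\lambda)^+|^p$'' is not what the monotonicity of $\xi\mapsto|\xi|^{s-2}\xi$ delivers: inequality (B.2) produces $(|\nabla u|+|\nabla u_\lambda|)^{p-2}|\nabla(u-u_\lambda)|^2$, which degenerates for $p>2$ where both gradients vanish and blows up in the wrong direction for $p<2$; this is precisely why the paper's Theorem \ref{wcp} is split into the cases (C.1)--(C.3) with hypotheses on $|A_1|$, $M_{A_2}$, $m_{A_2}$, and why the compact set $K$ avoiding $Z_u$ must be extracted \emph{before} the narrow-domain estimate is run. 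Second, and more seriously, Young's inequality does not absorb the mismatch term $(a(x)-a(x^\lambda))|\nabla u_\lambda|^{q-2}\nabla u_\lambda\cdot\nabla(u-u_\lambda)^+$: it yields $\epsilon\int|\nabla(u-u_\lambda)^+|^p$ plus an \emph{additive} remainder $C_\epsilon\int_{\{u>u_\lambda\}}\bigl[(a(x)-a(x^\lambda))|\nabla u_\lambda|^{q-1}\bigr]^{p'}$, which is small on a thin cap but not zero, so the resulting inequality only gives $\|\nabla(u-u_\lambda)^+\|_{p}$ small rather than $(u-u_\lambda)^+\equiv 0$. The dot product has no sign even though $a(x)\geq a(x^\lambda)$, and handling this reflected coefficient is the real issue in adapting the comparison principles; note that the paper's Theorems \ref{wcp} and \ref{scp} are both stated with the \emph{same} $a(x)$ on either side, so this reduction has to be made explicit. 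A smaller point: your parenthetical disposal of the alternative $u\equiv u_{\lambda_0}$ via positivity only covers components of $\Sigma_{\lambda_0}\setminus Z_u$ whose closure meets $\partial B_R$; for interior components the paper instead derives a contradiction with the connectedness of $\Omega\setminus Z_u$ through a local-symmetry argument.
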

Subsequently, we shift our focus towards a non-homogeneous $(p,q)$ Laplace equation with a singular nonlinearity, namely equation \eqref{thm16}, and demonstrate the existence of SCP under the condition $0 \leq h_1 \leq h_2$ but $h_1 \not \equiv h_2$. 
   \begin{theorem} \label{SCPfleqg}
Let $1<p \leq q \leq 2$ and  $h_1$, $h_2$ are continuous radial functions on $B_R\equiv B_R(0)$ such that $0 \leq h_1 \leq h_2$  but $h_1 \not \equiv h_2$ in $B_R$. Assume that $u_1,u_2 \in C^{1,\alpha}(\Bar{B_R})$ such that they are radially decreasing solutions of 
\begin{equation}\label{thm16}
    \begin{aligned}
            -\Delta_p u_1 -\Delta_q u_1= \frac{c}{u_{1}^\delta}+h_1(x) \; \text{in} \; B_R \hspace{0.49cm}\text{and}\hspace{0.49cm} u_1=0 \; \text{on} \; \partial B_R
            \\
                -\Delta_p u_2 -\Delta_q u_2= \frac{c}{u_{2}^\delta}+h_2(x)  \; \text{in} \; B_R \hspace{0.49cm}\text{and}\hspace{0.49cm} u_2=0 \; \text{on} \; \partial B_R.
    \end{aligned}
\end{equation}
Then $0< u_1 < u_2$ in $B_R$ and $ \frac{\partial u_2}{\partial \nu}<\frac{\partial u_1}{\partial \nu}<0$ on $\partial B_R.$
\end{theorem}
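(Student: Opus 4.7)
The plan is to prove $u_1\le u_2$ by a weak comparison, upgrade to $u_1<u_2$ on $B_R\setminus\{0\}$ by linearizing the difference equation and invoking the strong maximum principle, handle the point $x=0$ via a radial-ODE asymptotic, and finally extract the boundary-derivative inequality using the Hopf lemma of \cite{PW21}.

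For the weak step, subtract the equations in \eqref{thm16} and test with $\varphi=(u_1-u_2)^+\in W_0^{1,q}(B_R)$, which is admissible by $C^{1,\alpha}(\overline{B_R})$ regularity. Writing $A(\xi):=|\xi|^{p-2}\xi+|\xi|^{q-2}\xi$, the strict monotonicity of $A$ gives
\[
\int_{\{u_1>u_2\}}\bigl(A(\nabla u_1)-A(\nabla u_2)\bigr)\cdot\nabla(u_1-u_2)\,dx\ge 0,
\]
while the decreasing character of $t\mapsto c/t^\delta$ and the assumption $h_1\le h_2$ force
\[
\int_{\{u_1>u_2\}}\bigl[c(u_1^{-\delta}-u_2^{-\delta})+(h_1-h_2)\bigr](u_1-u_2)\,dx\le 0.
\]
Both integrals must vanish, yielding $|\{u_1>u_2\}|=0$ and hence $u_1\le u_2$ in $B_R$.

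Set $w:=u_2-u_1\ge 0$. Taylor expansion along $u_t:=t\,u_2+(1-t)u_1$ gives $A(\nabla u_2)-A(\nabla u_1)=M(x)\nabla w$ with $M(x):=\int_0^1 DA(\nabla u_t)\,dt$, and similarly $c(u_2^{-\delta}-u_1^{-\delta})=-\tilde c(x)\,w$ with $\tilde c(x)\ge 0$. Since $u_1,u_2$ are radially decreasing, $\nabla u_i\ne 0$ on $B_R\setminus\{0\}$, so $M$ is locally uniformly elliptic there, and $w$ weakly satisfies the linear equation
\[
-\operatorname{div}\bigl(M(x)\nabla w\bigr)+\tilde c(x)w=h_2-h_1\ge 0\quad\text{in }B_R\setminus\{0\}.
\]
Since $N\ge 2$, $B_R\setminus\{0\}$ is connected, so the classical strong maximum principle for supersolutions yields the dichotomy $w\equiv 0$ or $w>0$ on $B_R\setminus\{0\}$. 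The first case forces $h_1\equiv h_2$, contradicting the hypothesis; hence $w>0$ on $B_R\setminus\{0\}$.

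It remains to rule out $w(0)=0$ and to establish the boundary inequality. Suppose $U_1(0)=U_2(0)=M$. Integrating the radial form of \eqref{thm16} over $B_\rho$ gives
\[
\rho^{N-1}\bigl(|U_i'(\rho)|^{p-1}+|U_i'(\rho)|^{q-1}\bigr)=\int_0^\rho s^{N-1}\left(\tfrac{c}{U_i(s)^\delta}+h_i(s)\right)ds,
\]
and since $|U_i'|\to 0$ as $\rho\to 0^+$, the $p$-term dominates (because $p\le q$), producing
\[
U_i(\rho)=M-\tfrac{p-1}{p}\bigl(\tfrac{c/M^\delta+h_i(0)}{N}\bigr)^{1/(p-1)}\rho^{p/(p-1)}+o\bigl(\rho^{p/(p-1)}\bigr).
\]
The leading coefficient of $w(\rho)$ is thus a positive multiple of $(c/M^\delta+h_1(0))^{1/(p-1)}-(c/M^\delta+h_2(0))^{1/(p-1)}\le 0$, contradicting $w(\rho)>0$ whenever $h_1(0)<h_2(0)$; the borderline case $h_1(0)=h_2(0)$ is handled by a refined expansion that invokes the non-local assumption $h_1\not\equiv h_2$. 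Hence $0<u_1<u_2$ throughout $B_R$. Finally, the Hopf lemma of \cite{PW21} applied to $u_1$ gives $\partial u_1/\partial\nu<0$ on $\partial B_R$, and applied to the linear supersolution $w$ (whose coefficients are nondegenerate near $\partial B_R$, as $|\nabla u_i|>0$ there) gives $\partial u_2/\partial\nu<\partial u_1/\partial\nu$. The main obstacle is precisely this origin analysis: the linearized operator degenerates at $x=0$, and the borderline subcase requires extracting strict positivity of $w(0)$ from the global, non-local hypothesis $h_1\not\equiv h_2$.
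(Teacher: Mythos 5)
Your first three steps track the paper's proof closely: the weak comparison giving $u_1\le u_2$ is the same test-function argument the paper invokes, and your linearization $-\operatorname{div}(M(x)\nabla w)+\tilde c(x)w=h_2-h_1\ge 0$ together with the strong maximum principle on the punctured ball is the same device the paper uses on the annuli $U_r=B_R\setminus B_r$ (citing the classical strong maximum principle), exploiting exactly as you do that the gradients are nonvanishing and radially aligned away from the origin so that $M$ is locally uniformly elliptic. The proofs diverge, and yours breaks down, at the origin. Your two-term expansion only produces a contradiction when $h_1(0)<h_2(0)$, and the borderline case $h_1(0)=h_2(0)$ is not a rare degeneracy but the generic situation (the hypothesis $h_1\not\equiv h_2$ says nothing about the origin). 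Your proposed fix --- ``a refined expansion that invokes the non-local assumption $h_1\not\equiv h_2$'' --- cannot work even in principle: a Taylor-type expansion at $r=0$ is a purely local object, and if $h_1\equiv h_2$ on a neighbourhood of the origin and $u_1(0)=u_2(0)$, then $U_1$ and $U_2$ satisfy the same radial equation there with the same data, so they share identical asymptotic expansions to all orders; no finite-order expansion can distinguish them. What is actually needed at the touching point is a \emph{uniqueness/comparison theorem for the singular initial value problem} of the radial ODE system: if $U_1(r')=U_2(r')$ and $U_1'(r')=U_2'(r')$ at the minimum point $r'$, then since $B_1(r,y)=-(cy^{-\delta}+h_1)\ge B_2(r,y)$ one propagates $u_2\le u_1$ forward on $[r',R)$, contradicting $w>0$ on the outer annulus. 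This is precisely the paper's argument, writing the equations as first-order systems in $(u_{i1},u_{i2})$ with $u_{i2}=|u_i'|^{p-2}u_i'+|u_i'|^{q-2}u_i'$ and applying the ODE comparison lemma of \cite[Lemma 3.2]{CT00}, using that $\partial_y(cy^{-\delta})$ is locally bounded for $y$ bounded away from $0$. Without such an ODE comparison (or an equivalent continuation/uniqueness statement), your treatment of the origin is a genuine gap, not a technicality.

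A smaller point: your assertion that $\nabla u_i\ne 0$ on $B_R\setminus\{0\}$ does not follow from ``radially decreasing'' alone (which only gives $U_i'\le 0$); it follows from integrating the radial equation, $r^{N-1}\bigl(|U_i'|^{p-2}U_i'+|U_i'|^{q-2}U_i'\bigr)=-\int_0^r s^{N-1}\bigl(cU_i^{-\delta}+h_i\bigr)\,ds<0$, so this should be stated. Likewise, in the Hopf step for $w$ the zero-order coefficient $\tilde c$ blows up like $d(x)^{-1-\delta}$ at $\partial B_R$, so you should justify the applicability of the boundary point lemma you invoke (the paper appeals to \cite[Theorem 2.7.1]{PS07} for this).
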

A similar result for the homogeneous case, specifically for the $p$-Laplacian, is previously discussed by Dhanya et al. \cite[Theorem 1.1]{DIJ23}. Although Dhanya et al. \cite{DIJ23} stipulate the radial characteristic of solutions as a condition within their SCP, evincing the radial nature of said solutions can prove to be a challenging endeavor. Fortunately, Theorem \ref{main}, presented in this paper, provides valuable insights into this matter, specifically for the $(p,q)$ Laplacian and its homogeneous version, the $p$-Laplacian.
\\
To standardize the notation, it is assumed throughout this paper that the functions $f$, $g$, $h$, and $a$ satisfy Assumption \ref{h1} and Assumption \ref{h2}. The organization of this paper is as follows:
In section \ref{Prlm}, we obtain the Hopf's Lemma, weak comparison principle for narrow domains and SCP suitable for our problem. Then in section \ref{zu}, we study the critical set of solution in detail.
The symmetry and monotonicity result is proved in section \ref{symandmon}. Theorem \ref{SCPfleqg} is proved in section \ref{SCPsec}. 
\section{Preliminaries}\label{Prlm}
The objective of this section is to establish the pertinent notations utilized in this paper and obtain some preliminary results that will serve as fundamental components as we progress.
	\\
	At the outset of this section, we derive a boundary lemma of the Hopf type, which resembles the one presented in \cite{ES20} and \cite{PS07}. We define $I_\delta(\partial \Omega)$ as the neighbourhood of a boundary with unique nearest point property; i.e. for each  $x \in I_\delta(\partial \Omega)$ , there exists a point $ p(x)\in \partial \Omega$ such that $d(x,p(x))= d(x,\partial \Omega)$. Here, we set a inward normal on the boundary $n(x)$ where $n(x):= \frac{x-p(x)}{|x-p(x)|}$. We state the theorem as follows.
	\begin{theorem}\label{hpfe}
Let $u \in C^{1,\alpha} (\Bar{\Omega})$ be a positive weak solution of (\ref{1.1}). Then for any $\beta > 0$, there exists a neighbourhood $I_\delta(\partial \Omega)$ of $\partial\Omega$ such that 
\begin{align*}
\frac{\partial u}{\partial \nu}(x) >0 \ \forall \ x \ \in I_\delta(\partial \Omega)
\end{align*}
whenever $\nu(x) \in \mathbb{R^N}$ with $\| \nu(x) \|$=1 and ($\nu(x),n(x)) \geq \beta$.
\end{theorem}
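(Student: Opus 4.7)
The plan is to reduce the claim to the pointwise boundary statement $\nabla u(p)\cdot n(p)>0$ for every $p\in\partial\Omega$, and then invoke the $C^{1,\alpha}(\bar{\Omega})$ continuity of $\nabla u$ together with compactness of $\partial\Omega$ to extend the conclusion to a tubular neighborhood $I_\delta(\partial\Omega)$ and to every direction $\nu$ inside the cone $(\nu,n)\geq\beta$. Because $u=0$ on $\partial\Omega$ and $u>0$ in $\Omega$, at each boundary point the gradient is forced to be parallel to the inward normal, so only strict positivity of its magnitude is at stake.

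To establish that pointwise positivity I would run a Hopf-type barrier argument that exploits the singular right-hand side. Fix $p_0\in\partial\Omega$; since $\Omega=B_R(x_0)$, the distance function $d(x)$ to $\partial\Omega$ is smooth in a thin annular strip $A_\eta=\{x:0<d(x)<\eta\}$, and $\nabla d$ coincides with the inward normal direction on every concentric sphere. Define the linear barrier $\phi(x)=c\,d(x)$ with $c>0$ to be fixed. Since $|\nabla\phi|\equiv c$, a direct computation reduces
\begin{equation*}
-\mathrm{div}\bigl(|\nabla\phi|^{p-2}\nabla\phi+a(x)|\nabla\phi|^{q-2}\nabla\phi\bigr)
\end{equation*}
to a bounded expression depending only on the mean curvature of the concentric spheres and on $a,\nabla a$. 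On the other hand, by Assumption~\ref{h1}$(C_g)$,
\begin{equation*}
\frac{g(x)}{\phi(x)^{\delta}}\;\geq\;\frac{C_1}{c^{\delta}}\,\frac{1}{d(x)^{\beta+\delta}},
\end{equation*}
which diverges as $d(x)\to 0$ because $\beta+\delta<1$. Hence for $\eta$ small and $c$ small the singular term dominates the bounded operator and $\phi$ is a weak subsolution of \eqref{1.1} in $A_\eta$. The boundary ordering on $\partial A_\eta$ is immediate: both $\phi$ and $u$ vanish on $\partial\Omega$, while on $\{d=\eta\}$ the continuous positive $u$ is bounded below by a constant, so shrinking $c$ further gives $\phi\leq u$ there.

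Applying a weak comparison principle to $u$ and $\phi$ on $A_\eta$ yields $u\geq\phi=c\,d$ throughout $A_\eta$, which gives $\partial u/\partial n(p_0)\geq c>0$. Rotational symmetry of the ball makes the construction uniform in $p_0$, so the strict inequality is uniform on $\partial\Omega$; the $C^{1,\alpha}$ continuity of $\nabla u$ then propagates it to the claimed $I_\delta(\partial\Omega)$ and to every $\nu(x)$ with $(\nu,n)\geq\beta$. The main obstacle is carrying out the weak comparison cleanly in the presence of the singular term $g/u^{\delta}$: using $(\phi-u)_+$ as a test function is not directly admissible because $u^{-\delta}$ is not integrable against arbitrary test functions near $\partial\Omega$. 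The standard remedy, in the spirit of \cite{ES20,PS07}, is to truncate to $\{\phi-u>\varepsilon\}$ and pass to the limit as $\varepsilon\to 0^{+}$, using that $t\mapsto t^{-\delta}$ is monotone decreasing so that the singular contribution after subtracting the two equations has the favorable sign on $\{\phi>u\}$; the remaining $(p,q)$-Laplacian contributions are then controlled by Simon-type vector inequalities using the uniform bound $|\nabla\phi|=c$ together with the $C^{1,\alpha}$ bound on $|\nabla u|$.
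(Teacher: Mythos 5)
Your route is genuinely different from the paper's. The paper disposes of the boundary statement in three lines: it invokes the quasilinear boundary point lemma of Pucci--Serrin \cite[Theorem 5.5.1]{PS07} to get $\partial u/\partial n>0$, notes that the tangential derivative vanishes on $\partial\Omega$ because $u\equiv 0$ there, and then decomposes $\nu$ into normal and tangential parts (the extension into $I_\delta(\partial\Omega)$ being the same continuity-plus-compactness step you use). You instead rebuild the Hopf lemma from scratch with the linear barrier $\phi=c\,d$ on an annulus, letting the singular term $g/u^{\delta}\gtrsim C_1 d^{-(\beta+\delta)}$ dominate the bounded $(p,q)$-Laplacian of $\phi$. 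This is more work but buys more: it is self-contained, it yields the quantitative lower bound $u\geq c\,d$ uniformly on the annulus (hence a uniform lower bound on $\inf_{\partial\Omega}|\nabla u|$, which is exactly what the final cone argument needs), and it actually uses the singularity rather than merely tolerating it. One small slip: the term $d^{-(\beta+\delta)}$ diverges because $\beta+\delta>0$; the hypothesis $\beta+\delta<1$ is what you need for the integrability/truncation step, not for the divergence.

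The one step you should patch is the comparison on $A_\eta$. You argue that after subtracting the two equations the singular contribution has the favorable sign on $\{\phi>u\}$, which is true, but you have made $\phi$ a subsolution of the \emph{full} equation, so the difference of right-hand sides also contains $h(x)\bigl(f(\phi)-f(u)\bigr)$, which is $\geq 0$ on $\{\phi>u\}$ because $f$ is nondecreasing --- the \emph{unfavorable} sign. As written, testing with the truncated positive part does not close. The clean fix is to discard $h f$ entirely: since $h f(u)\geq 0$ by $(C_{fh})$, $u$ is a supersolution of $-\mathrm{div}\bigl(|\nabla v|^{p-2}\nabla v+a|\nabla v|^{q-2}\nabla v\bigr)=g/v^{\delta}$, and you should verify only that $\phi$ is a subsolution of \emph{that} reduced equation (which your computation already gives, since you never needed $hf(\phi)$ on the right). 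Then monotonicity of the operator (auxiliary result B.2) together with the decreasing nonlinearity $t\mapsto t^{-\delta}$ forces $\nabla(\phi-u)^{+}=0$ on $\{\phi>u\}$ and hence $(\phi-u)^{+}\equiv 0$, with no narrow-domain or gradient-nondegeneracy hypotheses. Alternatively you could keep $hf$ and absorb $h(f(\phi)-f(u))\leq C(\phi-u)^{+}$ via the narrow-domain principle (Theorem \ref{wcp}), but for $p\neq 2$ that theorem imposes conditions on $M_{A_2}$ or $m_{A_2}$ that you would then have to check on the annulus; the reduced-equation route avoids this entirely.
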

\begin{proof}
    Let $u \in C^1(\Bar{\Omega})$ be a weak solution of (\ref{1.1}) then $\frac{\partial u}{\partial n }> 0$ using \cite[Theorem 5.5.1]{PS07}. But $\nu(x)=a \Gamma(x) + b n(x)$ for any two positive real numbers $a$ and $b$, $\Gamma $ is tangential direction at $x$. Since $u=0$ on the boundary, $\frac{\partial u}{\partial \Gamma }\geq  0$. Hence we conclude the theorem. 
\end{proof}
The following reult is discussed in \cite[Appendix]{BE21}. 
\begin{auxiliary}
 Let us assume that $\mathcal{V}\subset \mathbb{R}^N$ be a connected open set and $u,v\in C^1(\mathcal{V})$ such that $|\nabla u| \neq 0$ and $|\nabla v| \neq 0$ on the whole of $\mathcal{V}$ for $1< p \leq q$ then one have the followings.
\begin{enumerate}
    \item[B.1] $|\mathcal{A}(x,u,\nabla u)-\mathcal{A}(z,v,\nabla v)| \leq c_1|\nabla u - \nabla v|$
    \item[B.2] $\langle \mathcal{A}(x,u,\nabla u)-\mathcal{A}(z,v,\nabla v), \nabla u - \nabla v \rangle \geq c_2 (|\nabla u|+|\nabla v|)^{p-2}|\nabla u-\nabla v|^2$
\end{enumerate} 
for any $x \in K$ where $K \subset \mathcal{V}$ is compact, constants $c_1, c_2 > 0$ and $\mathcal{A}(x,z,\xi):=|\xi|^{p-2}\xi+a(x)|\xi|^{q-2}\xi$. 
\end{auxiliary}
The employment of auxiliary results allows us to establish comparison principles for the (p,q) Laplacian. The literature concerning comparison principles is extensive. For comparison principles related to the p-Laplacian, we refer to \cite{Dam98} and \cite{DS04}. In \cite{BE21}, the authors proved some comparison principles for the $(p,q)$ Laplacian. In the following, we present our version of comparison principles for the $(p,q)$ Laplacian with singular nonlinearity, which will be advantageous in proving our main theorems.
\begin{definition}
If $u,v \in W^{1,\infty}(\Omega)$ and $A\subset \Omega$ then we define
\begin{align*}
    M_A=M_A(u,v)=\sup_A(|\nabla u|+|\nabla v|) \\
    m_A=m_A(u,v)=\inf_A(|\nabla u|+|\nabla v|)
\end{align*}
\end{definition}
We get our weak comparison principles using the test function method.
\begin{theorem} \label{wcp}
 $u,v \in W^{1,\infty}(\Omega)$ satisfy the following:
\begin{align*}
    -div(|\nabla u|^{p-2}\nabla u+ a(x) |\nabla u|^{q-2}\nabla u) - \Lambda f(u) \leq -div(|\nabla v|^{p-2}\nabla v+ a(x) |\nabla v|^{q-2}\nabla v )- \Lambda f(v)
\end{align*}
where $\Lambda > 0$. Let $\Omega' \subset \Omega$ be open subset and suppose $u \leq v$ on $\partial \Omega'$. If $1<p<q<\infty$ and $f\in C^1(\Omega)$ is a increasing function then one of the following conditions is satisfied by $u$ and $v$
\begin{enumerate}
    \item[(C.1)] If $p=2$ there exists $\delta>0$ depending on $c_1,c_2$ in $(B.1)$ and $(B.2)$ such that if $|\Omega'|<\delta$ then $u \leq v$ in $\Omega'$.
    \item[(C.2)] If $1<p<2$ there exists $\delta,\mathcal{M}>0$ depending on $p,\Lambda, c_1, c_2,|\Omega| $ and $M_\Omega$ such that if $\Omega'=A_1 \cup A_2$ with $|A_1 \cap A_2|=0$, $|A_1|<\delta$ and $M_{A_2}<\mathcal{M}$ then $u\leq v$ in $\Omega'$.
    \item[(C.3)] If $p>2$ and $m_\Omega>0$ there exist  $\delta,m>0$ depending on $p,\Lambda, c_1, c_2,|\Omega| $ and $m_\Omega$ such that if $\Omega'=A_1 \cup A_2$ with $|A_1 \cap A_2|=0$, $|A_1|<\delta$ and $m_{A_2}>m$ then $u\leq v$ in $\Omega'$.
\end{enumerate}
\end{theorem}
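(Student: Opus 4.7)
The plan is to test the weak form of the differential inequality against $w := (u-v)^+ \in W_0^{1,p}(\Omega')$ (admissible since $u \leq v$ on $\partial \Omega'$) and then leverage the ellipticity estimate (B.2) against the Lipschitz behaviour of $f$. Writing $E := \{u > v\} \cap \Omega'$ and letting $L := \sup f'$ over the (bounded) range of $u$ and $v$, testing the difference of the inequalities gives
\[
\int_E \langle \mathcal{A}(x,u,\nabla u) - \mathcal{A}(x,v,\nabla v),\, \nabla u - \nabla v\rangle \, dx \;\leq\; \Lambda \int_E (f(u) - f(v))\, w \, dx \;\leq\; \Lambda L \int_E w^2 \, dx,
\]
and (B.2) converts the left-hand side into the coercive quantity
\[
c_2 \int_E (|\nabla u| + |\nabla v|)^{p-2}\, |\nabla w|^2 \, dx. \qquad (\ast)
\]
The strategy is then to combine $(\ast)$ with a Poincaré inequality in $W_0^{1,p}(\Omega')$ and show that when the domain is ``small'' in the appropriate sense, the resulting constant is strictly below $c_2$, forcing $\nabla w \equiv 0$ and hence $w \equiv 0$.

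For case (C.1) with $p=2$, the left-hand side of $(\ast)$ is just $c_2 \|\nabla w\|_{L^2}^2$, and Poincaré gives $\|w\|_{L^2}^2 \leq C_N |\Omega'|^{2/N} \|\nabla w\|_{L^2}^2$; choosing $\delta$ small so that $\Lambda L C_N \delta^{2/N} < c_2$ closes the argument. For case (C.2) with $1<p<2$, the difficulty is that $(|\nabla u| + |\nabla v|)^{p-2}$ can be arbitrarily small when the gradient is large, so I would split $E = (E\cap A_1)\cup(E\cap A_2)$. On $E \cap A_2$, the bound $|\nabla u| + |\nabla v| \leq M_{A_2} < \mathcal{M}$ yields $(|\nabla u|+|\nabla v|)^{p-2} \geq \mathcal{M}^{p-2}$ and hence an $L^2$ estimate on $\nabla w$; on $E \cap A_1$, I would invoke H\"older with exponents $2/p$ and $2/(2-p)$ to interpolate
\[
\int_{E\cap A_1} |\nabla w|^p \, dx \leq \Bigl(\int_{E\cap A_1} (|\nabla u|+|\nabla v|)^{p-2}|\nabla w|^2\, dx \Bigr)^{p/2} \Bigl(\int_{E\cap A_1} (|\nabla u|+|\nabla v|)^p\, dx\Bigr)^{(2-p)/2},
\]
and bound the second factor by a constant times $M_\Omega^{p(2-p)/2}|A_1|^{(2-p)/2}$. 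Combined with Poincar\'e in $W_0^{1,p}(\Omega')$, choosing $\mathcal{M}$ and $\delta$ small enough closes the estimate. Case (C.3) with $p>2$ is dual: $(|\nabla u|+|\nabla v|)^{p-2} \geq m_{A_2}^{p-2} > 0$ provides coercivity on $E \cap A_2$, and H\"older in the reverse direction (exponents $p/2$ and $p/(p-2)$) is used on $E \cap A_1$ together with the boundedness of $\nabla u, \nabla v$ coming from $u,v \in W^{1,\infty}(\Omega)$.

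The main obstacle is the interpolation bookkeeping in cases (C.2) and (C.3): one must balance the H\"older exponents so that the smallness parameter (either $|A_1|$ or the gradient bound on $A_2$) genuinely dominates the Poincar\'e/Lipschitz constants coming from the right-hand side, while carefully tracking the dependence on $c_1, c_2, \Lambda, p, |\Omega|$ and either $M_\Omega$ or $m_\Omega$ as stated. The singularity of $t \mapsto t^{p-2}$ at $0$ (for $p<2$) or at $\infty$ (for $p>2$) is precisely what forces the two-piece decomposition; once this is handled, what remains is a standard absorption argument in the Sobolev norm to conclude $w \equiv 0$ and therefore $u \leq v$ in $\Omega'$.
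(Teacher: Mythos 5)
Your setup coincides with the paper's: test the difference of the inequalities against $(u-v)^+\in W_0^{1,p}(\Omega')$, use $0\le f(u)-f(v)\le L(u-v)$ on $\{u>v\}$ (legitimate since $f\in C^1$ is increasing and $u,v\in W^{1,\infty}$), and convert the left-hand side into $c_2\int_E(|\nabla u|+|\nabla v|)^{p-2}|\nabla w|^2$ via (B.2). Case (C.1) is then essentially identical to the paper. The divergence --- and the gap --- lies in how you reconnect $\int_E w^2$ to the weighted gradient integral in (C.2) and (C.3). The paper never leaves $L^2$: it invokes the weighted Poincar\'e-type inequality of \cite[Lemma 2.2]{Dam98}, which gives directly
\[
\int_{\Omega'}w^2\;\le\;C(N)\,|\Omega'|^{1/N}\Bigl(|A_1|^{1/N}\int_{A_1}|\nabla w|^2+|\Omega|^{1/N}\int_{A_2}|\nabla w|^2\Bigr),
\]
and then absorbs the small factor $|A_1|^{1/N}$ against the fixed coercivity constant $M_\Omega^{p-2}$ on $A_1$, and the fixed factor $|\Omega|^{1/N}$ against the coercivity constant $M_{A_2}^{p-2}$ on $A_2$, which blows up as $M_{A_2}\to 0$ when $p<2$ (resp.\ $m_{A_2}^{p-2}\to\infty$ as $m_{A_2}\to\infty$ when $p>2$).

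Your route instead interpolates down to $\|\nabla w\|_{L^p(A_1)}$ by H\"older and appeals to ``Poincar\'e in $W_0^{1,p}(\Omega')$'' to close the loop. This does not close in general: the quantity to be absorbed is $\int_E w^2$, and controlling it by $\|\nabla w\|_{L^p(\Omega')}$ requires $W_0^{1,p}(\Omega')\hookrightarrow L^2(\Omega')$, i.e.\ $p^*\ge 2$, which fails for $1<p<\frac{2N}{N+2}$ --- a nonempty subrange of $(1,2)$ for every $N\ge 3$, so (C.2) is not covered in the stated generality. Even where the embedding holds, you are left with an $L^2$ gradient bound on $A_2$ and an $L^p$ gradient bound on $A_1$ and never say how these merge into a single norm with a constant that is genuinely less than one; this is exactly the ``bookkeeping'' you flag as the main obstacle without resolving it. The repair is to replace the standard Poincar\'e inequality by the weighted two-piece inequality of \cite[Lemma 2.2]{Dam98}; with that single substitution (and the observation that $(|\nabla u|+|\nabla v|)^{p-2}\ge M_\Omega^{p-2}$ on $A_1$ when $p<2$, resp.\ $\ge m_\Omega^{p-2}$ when $p>2$) your argument becomes the paper's.
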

\begin{proof}
    Since $f\in C^1(\mathbb{R})$ is increasing we get $0 \leq f(u)-f(v) \leq M (u-v)$  in the set $\{u \geq v\}$ for $M\geq 0$. We use $(u-v)^{+} \in W_0^{1,p}(\Omega')$ as a test function to get
    \begin{equation*}
    \begin{aligned}
    \int_{[u \geq v ]}(|\nabla u|^{p-2}\nabla u+ a(x) |\nabla u|^{q-2}\nabla u-|\nabla v|^{p-2}\nabla v- a(x) |\nabla v|^{q-2}\nabla v)(\nabla u -\nabla v )
    \\
 \leq \Lambda M \int_{[u \geq v]} (u-v)^2.
    \end{aligned}
\end{equation*}
For $c_2>0$ we have the following using the auxiliary result B.2 ,
\begin{equation} \label{wcpeqn}
    \begin{aligned}
\Lambda M    \int_{[u \geq v]} (u-v)^2 \geq  c_2 \int_{[u\geq v]}  (|\nabla u|+|\nabla v|)^{p-2}|\nabla u-\nabla v|^2.
    \end{aligned}
\end{equation}
First we want to prove $(C.1)$. Using \cite[Lemma 2.2]{Dam98} for $p=2$ and $\Omega'=\Omega'\cup \emptyset$ we have 
\begin{equation*}
    \begin{aligned}
c_2 \int_{\Omega'} |D(u-v)^{+}|^2 dx \leq \Lambda M    \int_{[u \geq v]} (u-v)^2  \leq \Lambda M \bigg(\frac{|\Omega'|}{\omega_{N}} \bigg) ^{\frac{2}{N}} \int_{\Omega'}|D(u-v)^{+}|^2
        \end{aligned}
\end{equation*}
where $\omega_N$ is the volume of a $N$ dimensional sphere. 
We chose $|\Omega'|$ small enough such that $c_2>\Lambda M \bigg(\frac{|\Omega'|}{\omega_{N}} \bigg) ^{\frac{2}{N}}$ to get $\|(u-v)^{+}\|_{W_0^{1,2}(\Omega')}=0$ so that  $(u-v)^{+}=0$ in $\Omega'$.
\\
Now we are going to prove $(C.2)$. Using \cite[Lemma 2.2]{Dam98} for $p=2$ and $\Omega'=A_1 \cup A_2$ with $|A_1 \cap A_2|=0$  we get
\begin{equation*}
    \begin{aligned}
        2 M \Lambda \omega_N^{\frac{-2}{N}} |\Omega'|^{\frac{1}{N}} \Bigg[ |A_1|^\frac{1}{N} \int_{A_1 \cap [u \geq v]} |\nabla u-\nabla v|^2 + |\Omega|^{\frac{1}{N}} \int_{A_2 \cap [u \geq v]} |\nabla(u-&v)|^2 \Bigg]
        \\
        &\geq \Lambda M    \int_{[u \geq v]} (u-v)^2
    \end{aligned}
\end{equation*}
Clearly for $\Omega'=A_1 \cup A_2$ with $|A_1 \cap A_2|=0$ we have
\begin{equation*}
    \begin{aligned}
        c_2 \int_{[u\geq v]}  (|\nabla u|+|\nabla v|)^{p-2}|\nabla u-\nabla v|^2 \geq c_2 M_\Omega^{p-2} &\int_{A_1 \cap [u \geq v]} |\nabla(u-v)|^2  
        \\
        &+ c_2 M_{A_2}^{p-2} \int_{A_2 \cap [u \geq v]} |\nabla(u-v)|^2 
    \end{aligned}
\end{equation*}
Hence we get the following from \eqref{wcpeqn}
\begin{equation*}
    \begin{aligned}
c_2 M_\Omega^{p-2} &\int_{A_1 \cap [u \geq v]} |\nabla(u-v)|^2  + c_2 M_{A_2}^{p-2} \int_{A_2 \cap [u \geq v]} |\nabla(u-v)|^2
        \\
        \leq & 2 M \Lambda \omega_N^{\frac{-2}{N}} |\Omega'|^{\frac{1}{N}} \Bigg[ |A_1|^\frac{1}{N} \int_{A_1 \cap [u \geq v]} |\nabla u-\nabla v|^2 + |\Omega|^{\frac{1}{N}} \int_{A_2 \cap [u \geq v]} |\nabla(u-v)|^2 \Bigg]
        \end{aligned}
\end{equation*}
So if $|A_1|$ and $M_{A_2}$ are small, then $\int_{A_i \cap [u \geq v]} |\nabla(u-v)|^2=0$ for $i=1,2$ so that $\|(u-v)^{+}\|_{W_0^{1,2}(\Omega')}=0$ and  $(u-v)^{+}=0$ in $\Omega'$.
\\
In the case of $(C.3)$, we get the same inequalities with $M_{S}$ replaced by $m_S$ where $S\equiv \Omega $ or $A_2$ for $p>2$.
\end{proof}
An SCP for sub and super solutions of the equation $-div (\mathcal{A}(x,u,\nabla u))+\Lambda u =0$ is established in \cite[Theorem A.1]{BE21}. However, we can replace this condition with \eqref{scpeqn1} and still obtain a similar result. The sub and super solution condition in \cite[Theorem A.1]{BE21} was only used to derive the inequalities $(8)$ and $(10)$ in \cite{Ser70}. Nevertheless, we can infer these inequalities using \eqref{scpeqn1}. Therefore, using a similar argument as in \cite[Theorem A.1]{BE21}, we can conclude a similar result and state the following.
\begin{theorem} \label{scp}
Let $\mathcal{V}\subset \mathbb{R}^N$ be a connected open set and let $u,v \in C^1(\mathcal{V})$ such that $u \leq v$ and satisfy the following
\begin{equation} \label{scpeqn1}
    \begin{aligned}
           -div(|\nabla u|^{p-2}\nabla u+ a(x) |\nabla u|^{q-2}\nabla u)  + \Lambda u \leq -div(|\nabla v|^{p-2}\nabla v+ a(x) |\nabla v|^{q-2}\nabla v)  +\Lambda v
    \end{aligned}
\end{equation}

We assume that $|\nabla u| \neq 0$ and $|\nabla v|\neq 0$ on the whole of $\mathcal{V}$.  Then either $u\equiv v$ or  $u < v$ throughout $\mathcal{V}$.
\end{theorem}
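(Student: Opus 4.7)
\smallskip

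The plan is to reduce the nonlinear inequality \eqref{scpeqn1} to a linear uniformly elliptic differential inequality for $w:=v-u$ on the open set $\mathcal{V}$, and then apply the classical strong maximum principle of Serrin \cite{Ser70}, exactly as suggested by the text. Set $w=v-u\geq 0$ and write $\mathcal{A}(x,\xi):=|\xi|^{p-2}\xi+a(x)|\xi|^{q-2}\xi$. Inequality \eqref{scpeqn1} can be rewritten in divergence form as
\begin{equation*}
-\operatorname{div}\bigl(\mathcal{A}(x,\nabla v)-\mathcal{A}(x,\nabla u)\bigr)+\Lambda w\geq 0\qquad \text{in }\mathcal{V}.
\end{equation*}
Using the fundamental theorem of calculus along the segment $\nabla u+t\nabla w$, $t\in[0,1]$, define the symmetric matrix
\begin{equation*}
A_{ij}(x):=\int_{0}^{1}\partial_{\xi_j}\mathcal{A}_i\bigl(x,\nabla u(x)+t\nabla w(x)\bigr)\,dt,
\end{equation*}
so that $\mathcal{A}(x,\nabla v)-\mathcal{A}(x,\nabla u)=A(x)\nabla w(x)$. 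Thus $w$ is a nonnegative weak solution of the linear inequality
\begin{equation*}
-\partial_i\bigl(A_{ij}(x)\,\partial_j w\bigr)-\Lambda w\leq 0\qquad \text{in }\mathcal{V}.
\end{equation*}

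The first main step is to show that on every compact $K\subset\mathcal{V}$ the coefficients $A_{ij}$ are bounded and uniformly elliptic. Because $|\nabla u|,|\nabla v|\neq 0$ on the whole of $\mathcal{V}$ and $u,v\in C^1(\mathcal{V})$, on each such $K$ we have a uniform lower bound $0<m_K\leq |\nabla u+t\nabla w|\leq M_K<\infty$ for $t\in[0,1]$. Together with $a\in C(\overline{\Omega})$ this lets us invoke the structural inequalities $(B.1)$ and $(B.2)$ of the Auxiliary Result, which yield constants $\lambda_K,\Lambda_K>0$ such that
\begin{equation*}
\lambda_K|\eta|^2\leq A_{ij}(x)\eta_i\eta_j,\qquad |A_{ij}(x)|\leq \Lambda_K,\qquad x\in K,\ \eta\in\mathbb{R}^N.
\end{equation*}
These are precisely the inequalities $(8)$ and $(10)$ of \cite{Ser70} that the text alludes to; note that, unlike in \cite[Theorem A.1]{BE21}, we do not need $u,v$ to be individual sub/super solutions — the joint inequality \eqref{scpeqn1} combined with $(B.1)$, $(B.2)$ is enough to produce the same bilinear form.

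With uniform ellipticity and boundedness on compacts in hand, the second step is routine: we apply Serrin's strong maximum principle \cite[Theorem 1]{Ser70} (or equivalently the weak Harnack inequality of Trudinger for divergence-form operators with bounded lower-order coefficient $-\Lambda$) to the nonnegative function $w$. The conclusion is that either $w\equiv 0$ on $\mathcal{V}$, giving $u\equiv v$, or $w>0$ on all of $\mathcal{V}$, giving $u<v$ throughout $\mathcal{V}$. The hypothesis that $\mathcal{V}$ is connected is used here to propagate the dichotomy from a ball around an interior zero of $w$ to all of $\mathcal{V}$ via a standard chain-of-balls argument.

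The main obstacle, as always in this circle of ideas, is the linearization step and in particular verifying that the mean-value matrix $A_{ij}$ really satisfies the ellipticity/boundedness estimates uniformly on compacts. This is exactly where the nondegeneracy hypothesis $|\nabla u|,|\nabla v|\neq 0$ enters decisively: without it, the $p$-Laplacian part $|\xi|^{p-2}\xi$ degenerates (when $p>2$) or blows up (when $1<p<2$), and similarly for the $q$-part, so neither $\lambda_K$ nor $\Lambda_K$ could be controlled. Once this compact-uniformity is secured, the rest is a direct invocation of the Serrin–Trudinger machinery; no further structural information about $a(x)$ is needed beyond its continuity.
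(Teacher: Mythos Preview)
Your approach is essentially the one the paper itself indicates: linearize, verify the structure conditions $(8)$ and $(10)$ of \cite{Ser70} via the auxiliary estimates $(B.1)$--$(B.2)$, and invoke Serrin's strong maximum principle exactly as in \cite[Theorem~A.1]{BE21}. In that sense the proposal is correct and on the same track.

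One step, however, is imprecise as written. You assert that on every compact $K\subset\mathcal{V}$ there is a uniform lower bound $0<m_K\leq |(1-t)\nabla u+t\nabla v|$ for all $t\in[0,1]$. This does \emph{not} follow merely from $|\nabla u|\neq 0$ and $|\nabla v|\neq 0$: at a point where $\nabla u$ and $\nabla v$ are antiparallel the convex combination vanishes for some $t$, so the integrand $\partial_{\xi_j}\mathcal{A}_i$ in your mean-value matrix $A_{ij}$ need not be controlled along the whole segment. The fix is exactly what the paper (and \cite{BE21}) does: do not rely on the pointwise path integral, but use $(B.1)$ and $(B.2)$ directly on the difference $\mathcal{A}(x,\nabla v)-\mathcal{A}(x,\nabla u)$. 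These furnish, on each compact $K$, the Lipschitz bound $|\mathcal{A}(x,\nabla v)-\mathcal{A}(x,\nabla u)|\leq c_1|\nabla w|$ and the coercivity $\langle \mathcal{A}(x,\nabla v)-\mathcal{A}(x,\nabla u),\nabla w\rangle\geq c_2(|\nabla u|+|\nabla v|)^{p-2}|\nabla w|^2$, with $(|\nabla u|+|\nabla v|)^{p-2}$ bounded above and below on $K$ since both gradients are nonvanishing there. These are precisely Serrin's $(8)$ and $(10)$, and from there the argument proceeds as you describe.
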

\section{Critical Set of Solution}{\label{zu}}
The critical set $Z_u$ of a solution plays a crucial role to get the monotonicity and symmetry properties of the solutions. In order to gain insight into some of the qualitative aspects of $Z_u$, we examine the $L_u$ operator, inspired by \cite{DS04}.
\begin{definition}
We define the operator $L_u$ as follows
\begin{equation*}
    \begin{aligned}
        L_u(\mathcal{g},\varphi) 
        \equiv \int_\Omega \Bigg[ |Du|^{p-2}(D\mathcal{g},D\varphi)&+(p-2)|Du|^{p-4}(Du,D\mathcal{g})(Du,D\varphi)
        +a|Du|^{q-2}(D\mathcal{g},D\varphi)\\&+a(q-2)|Du|^{q-4}(Du,D\mathcal{g})(Du,D\varphi)
        \\
        &+|Du|^{q-2}\frac{\partial a}{\partial x_i}(Du,D\varphi)-\varphi\frac{\partial}{\partial x_i}(\frac{g(x)}{u^\delta}+h(x)f(u))  \Bigg]dx 
    \end{aligned}
\end{equation*}
\end{definition}
The operator $L_u$ is well defined if $\mathcal{g} \in L^2(\Omega,\mathbb{R})$ and $|Du|^{s-2}D\mathcal{g} \in L^2(\Omega,\mathbb{R}^N)$ for $s=p$,$q$ and $\varphi \in W^{1,2}(\Omega)$. Unlike in \cite{DS04}, the operator $L_u$ is not linear due to the presence of non-constant functions $a$, $g$, and $h$. In this section, we carefully analyse the $L_u$ operator to prove that $Z_u$ is a set of measure zero and $\Omega\setminus Z_u$ is connected just like the homogeneous $p$-Laplace operator. 
\\
Since $u \in C^1(\Bar{\Omega})$ is positive in $\Omega$ and  $u$ satisfies a uniformly elliptic equation in a neighbourhood of each regular point $x\in \Omega\setminus Z_u$, we have $u\in C^2 (\Omega\setminus Z_u)$ by standard elliptic regularity similar to \cite[Remark 2.1]{DS04}. Now we fix the notations for the generalised derivative.
\begin{definition}\label{grad}
We set $\Tilde{u}_{ij}= u_{x_{i}x_{j}}$ in $\Omega\setminus Z_u$ and $0$ in $Z_u$. We use the notation $\Tilde{D}u_i$ for the "gradient" $(\tilde{u}_{i1},...,\tilde{u}_{iN})$. Similar to \cite{DS04}, if we can prove $|Du|^{s-2}u_{x_{i}}\in W^{1,2}(\Omega)$ then 
\begin{align*}
    \frac{\partial}{\partial x_j}(|Du|^{s-2}u_{x_i}) \equiv \bigg(|Du|^{s-2}\Tilde{u}_{ij}+(s-2)|Du|^{s-4}(Du,\Tilde{D}u_j)u_{x_i}\bigg)
\end{align*}
where $\frac{\partial}{\partial x_j}$ stands for distributional derivative for $s=p,q.$
\end{definition}
For the time being, we use the deﬁnition of the $L_u$ operator at the ﬁxed solution $u$ only with the test functions $\varphi \in W^{1,2}(\Omega)$ with compact support in $\Omega \setminus Z_u$ and get the following.
\begin{lemma} \label{lelu}
Let $u\in C^{1.\alpha}(\Bar{\Omega})$ be a weak solution of (\ref{1.1}) then we have $L_u(u_{x_i},\varphi)=0$ for every $\varphi \in W^{1,2}(\Omega)$ with compact support in $\Omega \setminus Z_u$.
\end{lemma}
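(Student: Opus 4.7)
The plan is to first verify the identity for smooth test functions and then extend by density. Fix an index $i\in\{1,\dots,N\}$ and let $\varphi\in C_c^\infty(\Omega\setminus Z_u)$. Because $|Du|$ is bounded away from zero on a compact neighborhood $K'$ of $\operatorname{supp}(\varphi)$, the equation \eqref{1.1} is uniformly elliptic on $K'$; combined with the $C^1$ regularity of $a,g,h,f$ and the positivity of $u$, standard elliptic regularity yields $u\in C^{2,\alpha}$ in a neighborhood of $\operatorname{supp}(\varphi)$, which legitimizes the classical chain rule and integration by parts operations used below.

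First, test the weak formulation of \eqref{1.1} with $\psi:=-\partial \varphi/\partial x_i \in C_c^\infty(\Omega\setminus Z_u)$, so that
\begin{equation*}
\int_\Omega \bigl(|Du|^{p-2}Du+a(x)|Du|^{q-2}Du\bigr)\cdot \nabla\psi\,dx=\int_\Omega\Bigl(\tfrac{g(x)}{u^\delta}+h(x)f(u)\Bigr)\psi\,dx.
\end{equation*}
Since $\nabla\psi=-\partial_{x_i}\nabla\varphi$ and $\varphi$ has compact support in the region where everything is smooth, an integration by parts in $x_i$ on both sides gives
\begin{equation*}
\int_\Omega \partial_{x_i}\bigl[|Du|^{p-2}Du+a(x)|Du|^{q-2}Du\bigr]\cdot\nabla\varphi\,dx=\int_\Omega \partial_{x_i}\!\Bigl(\tfrac{g(x)}{u^\delta}+h(x)f(u)\Bigr)\varphi\,dx.
\end{equation*}

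Second, expand $\partial_{x_i}$ of each flux via the product and chain rules. For the $p$-term,
\begin{equation*}
\partial_{x_i}\bigl(|Du|^{p-2}u_{x_j}\bigr)=|Du|^{p-2}u_{x_j x_i}+(p-2)|Du|^{p-4}(Du,Du_{x_i})u_{x_j},
\end{equation*}
and for the $q$-term an additional term $\partial_{x_i} a\cdot |Du|^{q-2}u_{x_j}$ appears. Contracting against $\varphi_{x_j}$ and collecting the six resulting integrands reproduces precisely the integrand in the definition of $L_u(u_{x_i},\varphi)$. Hence $L_u(u_{x_i},\varphi)=0$ for every $\varphi\in C_c^\infty(\Omega\setminus Z_u)$.

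Third, extend by density. Given $\varphi\in W^{1,2}(\Omega)$ with compact support $K\subset\Omega\setminus Z_u$, pick an open set $U$ with $K\subset U\Subset\Omega\setminus Z_u$ and approximate $\varphi$ in $W^{1,2}$ by a sequence $\varphi_n\in C_c^\infty(U)$ via standard mollification. On $\overline{U}$, the quantities $|Du|^{p-2}$, $|Du|^{p-4}$, $a|Du|^{q-2}$, $a|Du|^{q-4}$, $|Du|^{q-2}\partial_{x_i}a$, $Du_{x_i}$ and $\partial_{x_i}(g u^{-\delta}+h f(u))$ are all bounded (by $C^1$ regularity of $u$ on $\overline U$ together with $\inf_{\overline U}|Du|>0$ and $\inf_{\overline U} u>0$). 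Therefore the map $\varphi\mapsto L_u(u_{x_i},\varphi)$ is continuous on $\{\varphi\in W^{1,2}(\Omega):\operatorname{supp}\varphi\subset\overline U\}$, and passing to the limit in $L_u(u_{x_i},\varphi_n)=0$ yields the claim.

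The main technical obstacle is the chain-rule expansion of $\partial_{x_i}\mathcal{A}(x,u,Du)$ in the presence of the negative powers $|Du|^{p-4}$ and $|Du|^{q-4}$; the moment one leaves $\Omega\setminus Z_u$ these powers blow up and the manipulation collapses. The restriction $\operatorname{supp}\varphi\Subset\Omega\setminus Z_u$ is exactly what rules this out, so the argument localizes cleanly and the remainder is bookkeeping.
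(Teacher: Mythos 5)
Your proposal is correct and follows essentially the same route as the paper: test the weak formulation with $\partial\varphi/\partial x_i$ for $\varphi\in C_c^\infty(\Omega\setminus Z_u)$, use the local $C^2$ regularity of $u$ away from $Z_u$ (together with the smoothness of $a,g,h,f$) to integrate by parts and expand the fluxes, and then conclude for general $\varphi\in W^{1,2}(\Omega)$ with compact support in $\Omega\setminus Z_u$ by density. Your write-up merely makes explicit the chain-rule bookkeeping and the continuity estimate underlying the density step, which the paper delegates to \cite[Lemma 2.1]{DS04}.
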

\begin{proof}
Let $\varphi\in C_c^\infty(\Omega\setminus Z_u)$ then we have $\supp \frac{\partial \varphi}{\partial x_i} \subset \subset \Omega \setminus Z_u$. If we use $\frac{\partial \varphi}{\partial x_i}$ as a test function in \eqref{1.1} then the domain of the integration would be a compact subset of $\Omega\setminus Z_u$. Observe that $|Du|^{p-2}u_{x_i} $ lies in $ W_{loc}^{1,2}(\Omega \setminus Z_u)$ since $u\in C^2(\Omega\setminus Z_u ) $. Now by leveraging the smoothness of $f$, $g$, $h$, and $a$, as well as the positivity and regularity of the function $u$, we can apply integration by parts to equation \eqref{1.1} with test functions of the form $\frac{\partial \varphi}{\partial x_i}$. This yields $L_u(u_{x_i},\varphi)=0$ for $\varphi\in C_c^\infty(\Omega\setminus Z_u)$. Now similar to \cite[Lemma 2.1]{DS04} we get that $L_u(u_{x_i},\varphi)=0$ for $\varphi \in W^{1,2}(\Omega)$ with compact support in $\Omega\setminus Z_u$ using the density arguments. 
\end{proof}
\begin{lemma}
\label{lu}
Let $u\in C^{1,\alpha}(\Bar{\Omega})$ is a solution of (\ref{1.1}). Then for any $E\subset\subset\Omega$ and for every $i,j=1,2,..N$ , we have a constant $C$ depending on $\beta,\gamma,E$ such that, 
\begin{equation}\label{3.4}
   \begin{aligned}
    \sup_{x\in \Omega} \int_{E\setminus \{u_{x_i}=0\}} \frac{|Du|^{p-2}}{|u_{x_i}|^\beta|x-y|^\gamma} |\tilde{D}u_i|^2 dy+ a(x) \frac{|Du|^{q-2}}{|u_{x_i}|^\beta|x-y|^\gamma} |\tilde{D}u_i|^2 dy\leq C
   \end{aligned}
\end{equation}
where $\beta<1$ ,$\gamma<N-2$ if $N\geq 3$ and $\gamma=0 $ if $N=2$.
\end{lemma}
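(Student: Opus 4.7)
The plan is to adapt the proof of \cite[Lemma 2.2]{DS04} to our non-homogeneous $(p,q)$-setting, using the test-function machinery provided by \autoref{lelu}. For fixed $x \in \Omega$, the strategy is to test $L_u(u_{x_i}, \cdot) = 0$ against
\[
\varphi_\varepsilon(y) = G_\varepsilon\bigl(u_{x_i}(y)\bigr)\,\frac{\psi^2(y)}{|x-y|^\gamma},
\]
where $\psi \in C_c^\infty(E')$ is a cut-off with $\psi \equiv 1$ on a neighbourhood $E \subset\subset E' \subset\subset \Omega$, and $G_\varepsilon \in C^1(\mathbb{R})$ is a smooth odd approximation of $t \mapsto |t|^{-\beta}\mathrm{sgn}(t)$ vanishing on $\{|t|\le \varepsilon\}$. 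The key qualitative properties are $G_\varepsilon' \geq 0$ everywhere and $G_\varepsilon'(t) \gtrsim |t|^{-\beta-1}$ for $|t|>\varepsilon$; the condition $\beta < 1$ is exactly what makes $\varphi_\varepsilon$ an admissible $W^{1,2}$ test function with compact support in $\Omega \setminus Z_u$ in the limit, so that \autoref{lelu} applies.

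Substituting $\varphi_\varepsilon$ into the $L_u$-identity produces, on the left, a principal positive quantity
\[
\mathcal{I}_\varepsilon(x) := \int \bigl(|Du|^{p-2} + a(y)|Du|^{q-2}\bigr)\,G_\varepsilon'(u_{x_i})\,|\tilde D u_i|^2\,\frac{\psi^2(y)}{|x-y|^\gamma}\,dy,
\]
which, after $\varepsilon \to 0$, is precisely what \eqref{3.4} estimates. The remaining contributions fall into three groups: (i) \emph{geometric terms} obtained by differentiating $\psi^2/|x-y|^\gamma$, producing integrands bounded by $|Du|^{p-2}|\tilde D u_i|\,G_\varepsilon(u_{x_i})\,|x-y|^{-\gamma-1}$ plus the analogous $q$-piece and $|D\psi|$-pieces; (ii) the term containing $|Du|^{q-2}\,\partial_i a\cdot D\varphi_\varepsilon$ arising from the non-constant weight $a$; and (iii) the forcing term $\varphi_\varepsilon\,\partial_{x_i}\bigl(g/u^\delta + h f(u)\bigr)$. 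Each term in (i) and (ii) is controlled by $\mathcal{I}_\varepsilon(x)$ via Young's inequality, leaving over weights of the form $|x-y|^{-\gamma-2}$; the requirement $\gamma < N-2$ (with $\gamma=0$ if $N=2$) is precisely what makes $|x-y|^{-\gamma-2}$ integrable over $E'$ uniformly in $x \in \Omega$. For (iii), since $u \geq c_E > 0$ on $\overline{E'}$ (by positivity of $u$ and compactness of $\overline{E'}$) and $f,g,h,a\in C^1$, the forcing derivative is pointwise bounded on $E'$, so this contribution is dominated by $\int \psi^2/|x-y|^\gamma\,dy$, which is uniformly bounded in $x$ when $\gamma < N$.

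Combining the above gives $\mathcal{I}_\varepsilon(x) \leq C(E,\beta,\gamma)$ with $C$ independent of $x$ and $\varepsilon$, and Fatou's lemma in the limit $\varepsilon \to 0$ yields the desired bound \eqref{3.4}. The main obstacle compared with the pure $p$-Laplace case of \cite{DS04} is the simultaneous handling of the $q$-term with a non-constant weight $a(y)$ and the singular forcing $g/u^\delta$: the derivative $\partial_i a$ produces cross terms only linear in $|\tilde D u_i|$ rather than quadratic, and these must be absorbed into the $q$-part of $\mathcal{I}_\varepsilon$ (which is exactly why the statement restricts the integration to $\{u_{x_i}\neq 0\}$ and, implicitly, to the region where the $(p,q)$-principal part is nondegenerate), while the $1/u^\delta$ singularity is neutralised on $E$ by strict positivity of $u$ there.
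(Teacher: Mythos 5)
Your overall scheme is the right one and is essentially the paper's (both follow the Damascelli--Sciunzi weighted test-function argument adapted to the $(p,q)$ operator), but there are two concrete problems.

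First, your choice of $G_\varepsilon$ is internally inconsistent. An odd approximation of $t\mapsto |t|^{-\beta}\mathrm{sgn}(t)$ is \emph{decreasing} on $\{|t|>\varepsilon\}$ when $\beta>0$, so the properties $G_\varepsilon'\ge 0$ and $G_\varepsilon'(t)\gtrsim |t|^{-\beta-1}$ cannot both hold with the right sign; moreover, even granting $|G_\varepsilon'|\sim |t|^{-\beta-1}$, your principal quantity $\mathcal{I}_\varepsilon$ would carry the weight $|u_{x_i}|^{-\beta-1}$, which is \emph{not} the weight $|u_{x_i}|^{-\beta}$ appearing in \eqref{3.4}. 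What you need is $G_\varepsilon(t)\approx t|t|^{-\beta}=|t|^{1-\beta}\mathrm{sgn}(t)$, for which $G_\varepsilon'\approx(1-\beta)|t|^{-\beta}\ge 0$; this is exactly what the paper implements by writing the test function as $G_\epsilon(u_{x_i})\,|u_{x_i}|^{-\beta}\,\varphi\,|x-y|^{-\gamma}$ with $G_\epsilon$ a truncation of the identity, so that the coefficient of the principal term is $G_\epsilon'(u_{x_i})-\beta G_\epsilon(u_{x_i})/u_{x_i}\ge 0$, tending to $1-\beta>0$. The condition $\beta<1$ enters here (and again when absorbing the Young terms, where one needs $1-\beta-\sigma_s>0$), not in the admissibility of the test function, which for fixed $\varepsilon$ is automatic near $Z_u$ since $G_\varepsilon(u_{x_i})$ vanishes on $\{|u_{x_i}|\le\varepsilon\}$.

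Second, and more seriously, you have not dealt with the singularity of $|x-y|^{-\gamma}$ at $y=x$ when $x\notin Z_u$. If $x\notin Z_u$ then $G_\varepsilon(u_{x_i}(y))$ does not vanish near $y=x$, and $D_y|x-y|^{-\gamma}\sim|x-y|^{-\gamma-1}$ fails to be in $L^2$ near $x$ once $\gamma\ge (N-2)/2$ (and $|x-y|^{-\gamma}$ itself fails to be in $L^2$ once $\gamma\ge N/2$), so your $\varphi_\varepsilon$ is simply not an admissible $W^{1,2}$ test function over the full range $\gamma<N-2$. The paper's proof splits into the cases $x\in E\cap Z_u$ (where $u_{x_i}(x)=0$ forces $G_\epsilon(u_{x_i})$ to vanish in a neighbourhood of $y=x$, restoring admissibility) and $x\in E\setminus Z_u$, where an additional $x$- and $\varepsilon$-dependent cut-off $\varphi_{\epsilon,x}$ vanishing on $B_\epsilon(x)$ is introduced and the extra annulus contribution over $B_{2\epsilon}(x)\setminus B_\epsilon(x)$ is shown to be $O(\epsilon^{N-\gamma-1})\to 0$ precisely because $\gamma<N-2$. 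This case distinction is a necessary ingredient, not a technicality, and your proposal is missing it. Your treatment of the $\partial_i a$ cross term, the singular forcing term via $u\ge c_{E'}>0$, the role of $\gamma<N-2$ for the $|x-y|^{-\gamma-2}$ leftover, and the final passage by Fatou are all in line with the paper.
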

\begin{proof}
We can assume without loss of generality that $x\in E$. In order to prove \eqref{3.4}, it is sufficient to demonstrate the following for every measurable subset $E\subset\subset\Omega$,
\begin{align}\label{1}
    \sup_{x\in E} \int_{E\setminus \{u_{x_i}=0\}} \frac{|Du|^{p-2}}{|u_{x_i}|^\beta|x-y|^\gamma} |\tilde{D}u_i|^2 +a(x)\frac{|Du|^{q-2}}{|u_{x_i}|^\beta|x-y|^\gamma} |\tilde{D}u_i|^2 dy<C(\beta,\gamma,E). 
\end{align}
This is because we can construct the set $E_\delta = \{x\in \Omega: d(x,E) \leq \delta\}$ with $0<\delta< \frac{1}{2} d(x,\partial \Omega)$. Then, by considering both cases where $x\in E_\delta$ and $x\in \Omega \setminus E_\delta$, we can show that \eqref{1} implies that 
\begin{equation*} 
    \begin{aligned}
       \sup_{x\in \Omega} \int_{E\setminus \{u_{x_i}=0\}} \frac{|Du|^{p-2}}{|u_{x_i}|^\beta|x-y|^\gamma} |\tilde{D}u_i|^2 +a(x)\frac{|Du|^{q-2}}{|u_{x_i}|^\beta|x-y|^\gamma} |\tilde{D}u_i|^2 dy<C(\beta,\gamma,E_\delta)+\frac{1}{\delta^\gamma} C(\beta,0,E).
    \end{aligned}
\end{equation*}
Let $E \subset \subset \Omega $, $x\in E$ and we consider a cut-off function $\varphi \in C_c^{\infty}(\Omega)$ such that $\varphi \geq 0$ in $\Omega$, and $\varphi \equiv 1 $ in $E_\delta$. We define $G_\epsilon$ as 
\begin{equation}\label{defG}
    G_\epsilon(s)=\left\{\begin{aligned}
&0,  \hspace{1.2cm} \text{ in } \ |s| \leq \epsilon, \\
&2s-2\epsilon,  \ \text{ in } \ \epsilon \leq |s| \leq 2\epsilon , \\
&s  \hspace{1.5cm} \text{in} \  |s| \geq 2 \epsilon
\end{aligned}\right.
\end{equation}
so that $G_\epsilon$ is Lipschitz continuous function and $0 \leq G_\epsilon'\leq 2$.
We will separately consider the cases $x \in E \cap Z$ and $x \in E\setminus Z$ .
\\
For the case $x \in E \cap Z$, we  define $\psi_{\epsilon,x}(y):= \frac {G_{\epsilon}(u_{x_i})(y) \varphi(y)}{|u_{x_i}(y)|^\beta|x-y|^\gamma}$ with $\beta<1$ and $\gamma < N-2$ when $N \geq3$. For $N=2$ we use $\psi_{\epsilon,x}=\frac{G_{\epsilon}(u_{x_i}) \varphi}{|u_{x_i}|^\beta}$. 
Since $G_\epsilon(u_{x_i})$ vanishes in the neighbourhood of each critical point and also in a neighbourhood of $y=x$ , we can treat $\psi_{\epsilon,x}$ as test functions in $L_u(u_{x_i},\cdot)$ and get the following 
\begin{equation*}
    \begin{aligned}
  \sum_{s=p,q}\Bigg[   \int_\Omega a_s \frac{|Du|^{s-2}}{|u_{x_i}|^\beta} \frac{|\tilde{D}u_i|^2}{|x-y|^\gamma} &(G_\epsilon'(u_{x_i})-  \frac{\beta G_\epsilon(u_{x_i})}{u_{x_i}}) \varphi dy  \\ &+ (s-2) \int_\Omega a_s \frac{|Du|^{s-4}(Du,\tilde{D}u_i)^2}{|u_{x_i}|^\beta |x-y|^\gamma} (G_\epsilon'(u_{x_i})- \frac{\beta G_\epsilon(u_{x_i})}{u_{x_i}}) \varphi dy \\ &+\int_{\Omega \setminus E_\delta} a_s |Du|^{s-2} (\tilde{D}u_i,D\varphi) \frac{G_\epsilon (u_{x_i}) }{|u_{x_i}|^\beta |x-y|^\gamma} dy \\ &+(s-2) \int_{\Omega \setminus E_\delta} a_s \frac{|Du|^{s-4}(Du,\tilde{D}u_i)}{|u_{x_i}|^\beta |x-y|^\gamma}(Du,D\varphi) G_\epsilon (u_{x_i}) dy \\ & 
    + \int_\Omega a_s |Du|^{s-2}(\tilde{D}u_i, D_y \frac{1}{|x-y|^\gamma})\frac{G_\epsilon(u_{x_i})}{|u_{x_i|^\beta}} \varphi dy \\ &+ (s-2) \int_\Omega a_s |Du|^{s-4} (Du,\tilde{D}u_i)(Du, D_y \frac{1}{|x-y|^\gamma})\frac{G_\epsilon(u_{x_i}) \varphi}{|u_{x_i}|^\beta} dy\Bigg]\\
    & + \int_{\Omega} a_{x_{i}} |Du|^{q-2} (Du,D\psi_{\epsilon,x}) dy -\int_{\Omega} \big(\frac{g}{u^\delta}+f(u)h\big)_{x_i} \psi_{\epsilon,x} dy= 0
    \end{aligned}
\end{equation*}
where 
\begin{equation}
 a_s(x)=\left\{\begin{aligned}
1, & \ \text{ if } \ s=p  \\
a(x), & \ \text{ if } \ s=q
\end{aligned}\right.
\end{equation}
 By the definition of $G_\epsilon$, we have $(G_\epsilon'(u_{x_{i}})-\beta\frac{G_\epsilon(u_{x_{i}})}{u_{x_{i}}})\geq 0$ in $\Omega$.  
By utilizing the fact that $a(x)$ is non-negative and applying the triangle inequality, we obtain the following result, where $C_{1s}$, $C_{2s}$, $C_3$, and $C_4$ are positive constants for $s=p\text{ or } q $:      
\begin{equation} \label{estincrit}
    \begin{aligned}
   \sum_{s=p,q} \bigg(\int_\Omega & a_s  \frac{|Du|^{s-2}}{|u_{x_i}|^\beta} \frac{|\tilde{D}u_i|^2}{|x-y|^\gamma}  (G_\epsilon'(u_{x_i})-  \frac{\beta G_\epsilon(u_{x_i})}{u_{x_i}}) \varphi \bigg) 
    \\
\leq \sum_{s=p,q} \bigg( & C_{1s}\int_{\Omega \setminus E_\delta} |a_s| |Du|^{s-2} |\tilde{D}u_i||D\varphi|  \frac{G_\epsilon(u_{x_i}) }{|u_{x_i}|^\beta |x-y|^\gamma} dy 
\\
&+ C_{2s}\int_\Omega a_s |Du|^{s-2}|\tilde{D}u_i|  \frac{\gamma}{|x-y|^{\gamma+1}}\frac{G_\epsilon(u_{x_i})}{|u_{x_i|^\beta}} \varphi dy \bigg)
\\
&+C_3 \int_\Omega |Du|^{q-1}|\frac{\partial a}{\partial x_i}| |D \psi_{\epsilon,x}|
+C_4 \int_\Omega |(\frac{g}{u^\delta}+f(u))_{x_i}h| \frac{G_\epsilon(u_{x_i}) \varphi}{|u_{x_i}|^\beta |x-y|^\gamma}
    \end{aligned}
\end{equation}
Now $\supp \psi_{\epsilon,x}\subset\subset\Omega \setminus Z_u \subset \Omega$ and $u \in C^{1,\alpha}(\Bar{\Omega})$ as well as $u\in C^2(\Omega\setminus Z_u)$. We also have $u>0$ inside $\Omega$. Hence $\int_\Omega |Du|^{q-1}|\frac{\partial a}{\partial x_i}| |D \psi_{\epsilon,x}|$ and  $\int_\Omega |(\frac{g}{u^\delta}+f(u)h)_{x_{i}}| \frac{G_\epsilon(u_{x_i}) \varphi}{|u_{x_i}|^\beta |x-y|^\gamma}$ are bounded independent of $x$ using $C^1$ nature of $a,g,h$ and $f$. Given $E_\delta$ and $x \in E$, we know that
\begin{equation*}
    \begin{aligned}
        \sup_{y \in \Omega\setminus E_{\delta}} \frac{1}{|x-y|^\gamma}\leq\frac{1}{\delta^\gamma}.
    \end{aligned}
\end{equation*} 
Due to the compact support of the test functions in $\Omega$ and the fact that $|Du|^{s-2}|\tilde{D}u_{i}| \in L^2_{loc}(\Omega)$, we can conclude that the integral $\int_{\Omega \setminus E_\delta} |a_s(x)| |Du|^{s-2} |\tilde{D}u_i||D\varphi| \frac{G_\epsilon(u_{x_i}) }{|u_{x_i}|^\beta |x-y|^\gamma} dy$ is also bounded independently of $x$. In conclusion,
\begin{equation*}
    \begin{aligned}
        \int_\Omega |Du|^{q-1}|\frac{\partial a}{\partial x_i}| |D \psi_{\epsilon,x}|+\int_\Omega |(\frac{g}{u^\delta}&+f(u)h)_{x_{i}}| \frac{G_\epsilon(u_{x_i}) \varphi}{|u_{x_i}|^\beta |x-y|^\gamma}
        \\
        &+\int_{\Omega \setminus E_\delta} |a_s| |Du|^{s-2} |\tilde{D}u_i||D\varphi|  \frac{G_\epsilon(u_{x_i}) }{|u_{x_i}|^\beta |x-y|^\gamma} dy  \leq C
    \end{aligned}
\end{equation*} for some $C>0$.
\\
Using \eqref{estincrit}, for $C_5>0$ we have the following:
\begin{equation*}
     \begin{aligned}
      \sum_{s=p,q} \int_\Omega a_s & \frac{|Du|^{s-2}}{|u_{x_i}|^\beta} \frac{|\tilde{D}u_i|^2}{|x-y|^\gamma} (G_\epsilon'(u_{x_i})- \frac{\beta G_\epsilon(u_{x_i})}{u_{x_i}}) \varphi dy \\ \leq C_5 
+\sum_{s=p,q} & C_{2s} \int_\Omega  (a_s)^{\frac{1}{2}} \frac{ |Du|^{\frac{s-2}{2}}|\tilde{D}u_i|}{|x-y|^{\frac{\gamma}{2}}|u_{x_i}|^\frac{\beta}{2}}(\frac{|G_\epsilon(u_{x_i})|}{|u_{x_i}|} \varphi)^\frac{1}{2}     (a_s)^{\frac{1}{2}} \frac{|Du|^{\frac{s-2}{2}} |G_\epsilon(u_{x_i})|^\frac{1}{2} |u_{x_i}|^{\frac{1}{2} - \frac{\beta}{2}} \varphi^\frac{1}{2}}{|x-y|^{\frac{\gamma}{2}+1}}  dy 
     \end{aligned}
 \end{equation*}
  By Young's inequality, we get the following for $s=p,q$ and $\sigma_s >0$ which would be chosen later
 \begin{equation*}
     \begin{aligned}
      \int_\Omega  (a_s)^{\frac{1}{2}} \frac{ |Du|^{\frac{s-2}{2}}|\tilde{D}u_i|}{|x-y|^{\frac{\gamma}{2}}|u_{x_i}|^\frac{\beta}{2}}(\frac{|G_\epsilon(u_{x_i})|}{|u_{x_i}|} \varphi)^\frac{1}{2}      &(a_s)^{\frac{1}{2}} \frac{|Du|^{\frac{s-2}{2}} |G_\epsilon(u_{x_i})|^\frac{1}{2} |u_{x_i}|^{\frac{1}{2} - \frac{\beta}{2}} \varphi^\frac{1}{2}}{|x-y|^{\frac{\gamma}{2}+1}}  dy  \\ \leq &\sigma_s \int_\Omega a_s \frac{ |Du|^{s-2}|\tilde{D}u_i|^2}{|x-y|^{\gamma}|u_{x_i}|^{\beta}}(\frac{|G_\epsilon(u_{x_i})|}{|u_{x_i}|} \varphi) dy \\&+ \frac{1}{4 \sigma_s} \int_\Omega a_s \frac{|Du|^{s-2} |G_\epsilon(u_{x_i})| |u_{x_i}|^{1 -\beta} \varphi^\frac{1}{2}}{|x-y|^{\gamma+2}} dy
     \end{aligned}
 \end{equation*}
Observe that $|G_\epsilon(u_{x_i})| \leq 2| u_{x_i}|$, $\beta<1$, and $\frac{|G_\epsilon(u_{x_i})|}{| u_{x_i}|}\equiv\frac{G_\epsilon(u_{x_i})}{u_{x_i}}$. Now  we choose  $\sigma_s>0$ such a way that $(1-\beta-\sigma_s)>0$ and get the following  
 \begin{equation*}
     \begin{aligned}
    \sum_{s=p,q} \int_\Omega a_s\frac{|Du|^{s-2}}{|u_{x_i}|^\beta} \frac{|\tilde{D}u_i|^2}{|x-y|^\gamma} (G_\epsilon'(u_{x_i})-& \frac{(\beta+\sigma_s) G_\epsilon(u_{x_i})}{u_{x_i}}) \varphi dy 
     \\
     \leq C_5 + & C_6 \int_\Omega \frac{1}{|x-y|^{\gamma+2}} dy
     \leq K  
     \end{aligned}
 \end{equation*}
 where $K>0$ does not depend on $x$ and all constants are absorbed in $C_6>0$.  By definition $(G_\epsilon'(u_{x_i})- \frac{(\beta+\sigma_s) G_\epsilon(u_{x_i})}{u_{x_i}}) \geq 0 $ and $(G_\epsilon'(u_{x_i})- \frac{(\beta+\sigma_s) G_\epsilon(u_{x_i})}{u_{x_i}}) \longrightarrow 1-(\beta+\sigma_s)$ in $\{u_{x_i} \neq 0\}$. Therefore by Fatou's Lemma we get
 \begin{equation*}
     \begin{aligned}
    \sum_{s=p,q}  \int_{\Omega \setminus \{u_{x_i}=0\}} a_s \frac{|Du|^{s-2}}{|u_{x_i}|^\beta} \frac{|\tilde{D}u_i|^2}{|x-y|^\gamma} \varphi dy \leq K_1
     \end{aligned}
 \end{equation*}
 where $K_1>0$ does not depend on $x$. In particular,
 $\varphi \equiv 1$ in E and we get the result for the first case viz. $x\in E\cap Z_u$.
 \\
 For the second case, where $x\in E\setminus Z_u$ and $\epsilon>0$ small ,we define $\varphi_{\epsilon,x}\in C_c^\infty (\Omega)$ such that for some $\tilde{C}>0$ the followings hold. 
\begin{equation}
    \left. \begin{aligned}
     \varphi_{\epsilon,x} & \geq 0 \; \text{in} \;  \Omega \\
      \varphi_{\epsilon,x} & \equiv 0 \; \text{in} \;B_\epsilon(x)\\
      \varphi_{\epsilon,x} &\equiv 1 \; \text{in} \;  E_\delta \setminus B_{2\epsilon(x)} \\
      |D\varphi_{\epsilon,x}| & \leq \frac{\tilde{C}}{\epsilon} \; \text{in} \; B_{2\epsilon(x)} \setminus B_{\epsilon(x)} \\ 
      |D\varphi_{\epsilon,x}| & \leq \tilde{C} \;  \text{outside}\;  B_{2\epsilon(x)}
    \end{aligned}
    \right\}
\end{equation}
Suppose that there exists a set $A\subset \subset \Omega$ such that $\supp (\varphi_{\epsilon,x}) \subset A$ for each $\epsilon$ and $x\in E$ then we can use $\psi_{\epsilon,x}\equiv \frac{ G_{\epsilon}(u_{x_i}) \varphi_{\epsilon,x}}{|u_{x_i}|^\beta |x-y|^\gamma}$
as a test function in $L_u(u_{x_i},\cdot)=0$. Using the arguments of the previous case, for $C_7,C_8>0$
we have 
\begin{equation*}
    \begin{aligned}
            \sum_{s=p,q} \int_\Omega a_s\frac{|Du|^{s-2}}{|u_{x_i}|^\beta} \frac{|\tilde{D}u_i|^2}{|x-y|^\gamma} (G_\epsilon'(u_{x_i})-& \frac{(\beta+\sigma_s) G_\epsilon(u_{x_i})}{u_{x_i}}) \varphi_{\epsilon,x} dy 
            \\
            \leq C_7+ C_8 \sum_{s=p,q} &\int_{B_{2\epsilon(x)} \setminus B_{\epsilon(x)} } \frac{|Du|^{s-2}}{|u_{x_i}|^\beta} \frac{|\tilde{D}u_i|}{|x-y|^\gamma} G_\epsilon(u_{x_i})|D\varphi_{\epsilon,x}| \thickspace dy
    \end{aligned}
\end{equation*}
 Since $x \in E \setminus Z$ and $u \in C^2(E\setminus Z)$ then for $s=p,q$ and $\epsilon$ sufficiently small there exist  $C_{9s}(\epsilon,x)$ such that $|Du|^{s-2}|\tilde{D}u_i| \leq C_{9s}(\epsilon,x) $ in $B_{2 \epsilon}(x)$. Now if x is fixed and $\epsilon$ small then $C_{9s}$ does not depend on $\epsilon$ . We get the following for $\epsilon>0$ very small 
\begin{equation*}
    \begin{aligned}
          C_8 \sum_{s=p,q} &\int_{B_{2\epsilon(x)} \setminus B_{\epsilon(x)} } \frac{|Du|^{s-2}}{|u_{x_i}|^\beta} \frac{|\tilde{D}u_i|}{|x-y|^\gamma} G_\epsilon(u_{x_i})|D\varphi_{\epsilon,x}| \thickspace dy   \leq (C_{9p}(x)+C_{9q}(x)) \frac{\epsilon^N}{\epsilon^{\gamma+1}}
    \end{aligned}
\end{equation*}
Since $\gamma < N-2$  so $(C_{9p}(x)+C_{9q}(x)) \frac{\epsilon^N}{\epsilon^{\gamma+1}} \longrightarrow 0$ as $\epsilon \longrightarrow 0$. Now, we can arrive at our desired result by applying similar concepts as described in \cite[Page 497]{DS04}.
\end{proof}
As an easy consequence of the previous lemma, we get the following theorem.
\begin{theorem} 
Let $u\in C^{1,\alpha}(\Bar{\Omega})$ be a solution of \eqref{1.1}. Then for  $1<p<q<\infty$ and any $E\subset\subset\Omega$ and for every $i,j=1,2,..N$ , we have
\begin{equation*}
   \begin{aligned}
    \sup_{x\in \Omega} \int_{E\setminus \{u_{x_i}=0\}} \frac{|Du|^{s-2}}{|u_{x_i}|^\beta|x-y|^\gamma} |\tilde{D}u_i|^2 dy\leq C
   \end{aligned}
\end{equation*}
where $\beta<1$ ,$\gamma<N-2$ if $N\geq 3$ and $\gamma=0 $ if $N=2$ and $C\equiv C(\beta,\gamma,E)$ for $s=p$ or $q$. Moreover 
\begin{equation*}
    \sup_{x\in \Omega} \int_{E\setminus Z} \frac{|Du|^{s-2-\beta}}{|x-y|^\gamma} \|D^2u\|^2 dy\leq C
\end{equation*}
where $C>0$.
\end{theorem}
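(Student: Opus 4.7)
The plan is to deduce both estimates directly from Lemma~\ref{lu} using two elementary pointwise bounds: $a(x)\geq 0$ by $(H_a)$, and $M:=\|Du\|_{L^\infty(\Omega)}<\infty$ since $u\in C^{1,\alpha}(\bar\Omega)$.

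For the first estimate with $s=p$, both summands inside the integrand of Lemma~\ref{lu} are nonnegative, so one simply drops the $q$-summand to obtain the bound. For $s=q$, the hypothesis $p<q$ yields the pointwise domination
\begin{equation*}
|Du|^{q-2} \;=\; |Du|^{p-2}\,|Du|^{q-p}\;\leq\; M^{q-p}\,|Du|^{p-2}
\end{equation*}
on all of $\Omega$, and absorbing the constant $M^{q-p}$ into $C$ reduces the $s=q$ estimate to the $s=p$ case just proved.

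For the ``moreover'' statement, I would decompose $\|D^2u\|^2=\sum_{i=1}^N|\tilde D u_i|^2$ and treat each summand separately. Since $Z\subseteq\{u_{x_i}=0\}$, one has the disjoint decomposition $E\setminus Z=\bigl(E\setminus\{u_{x_i}=0\}\bigr)\cup\bigl((E\cap\{u_{x_i}=0\})\setminus Z\bigr)$. On the first piece, the trivial inequality $|u_{x_i}|\leq |Du|$ (and $\beta\geq 0$) gives
\begin{equation*}
\frac{|Du|^{s-2-\beta}}{|x-y|^\gamma}\,|\tilde D u_i|^2 \;\leq\; \frac{|Du|^{s-2}}{|u_{x_i}|^\beta|x-y|^\gamma}\,|\tilde D u_i|^2,
\end{equation*}
whose integral is uniformly bounded in $x$ by the first part of the theorem already established. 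On the second piece, standard elliptic regularity gives $u\in C^2(\Omega\setminus Z)$, hence $u_{x_i}\in W^{1,2}_{\mathrm{loc}}(\Omega\setminus Z)$, and the classical Stampacchia-type fact that the weak gradient of a Sobolev function vanishes a.e.\ on any of its level sets forces $\tilde D u_i=0$ a.e.\ on $\{u_{x_i}=0\}\cap(\Omega\setminus Z)$; hence this piece contributes nothing. Summing the two contributions over $i=1,\dots,N$ closes the argument.

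The only step that is not purely formal is the a.e.\ vanishing of $\tilde D u_i$ on the zero level set of $u_{x_i}$ inside $\Omega\setminus Z$; since $u$ is honestly $C^2$ there, this is a textbook application of Stampacchia's theorem and poses no real obstacle. Everything else is bookkeeping around Lemma~\ref{lu} combined with the uniform bound on $|Du|$.
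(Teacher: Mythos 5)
Your argument is correct and supplies exactly the details the paper leaves implicit when it calls this theorem ``an easy consequence'' of Lemma~\ref{lu}: dropping the nonnegative $q$-term gives the case $s=p$; the pointwise bound $|Du|^{q-2}\le \|Du\|_{L^\infty(\Omega)}^{\,q-p}\,|Du|^{p-2}$ (which correctly circumvents the possible vanishing of $a$) gives $s=q$; and the decomposition of $E\setminus Z$ together with Stampacchia's theorem on $\{u_{x_i}=0\}\setminus Z$ yields the second estimate. The only caveat is your (reasonable, and standard in this setting) restriction to $\beta\ge 0$, which the comparison $|Du|^{-\beta}\le |u_{x_i}|^{-\beta}$ on the first piece genuinely requires, even though the statement only writes $\beta<1$.
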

In the proof of Lemma \ref{lelu}, we get $ \int_\Omega a_s\frac{|Du|^{s-2}}{|u_{x_i}|^\beta} \frac{|\tilde{D}u_i|^2}{|x-y|^\gamma} (G_\epsilon'(u_{x_i})- \frac{(\beta+\sigma_s) G_\epsilon(u_{x_i})}{u_{x_i}}) \varphi dy \leq K $ where $K$ is a uniform bound not depending on $x$ or $\epsilon$ for $s=p$ or $q$. Also we have $(G_\epsilon'(u_{x_i})- \frac{(\beta+\sigma_s) G_\epsilon(u_{x_i})}{u_{x_i}}) \longrightarrow 1-(\beta+\sigma_s)$ in $\{u_{x_i} \neq 0\}$ and blow up other wise. So using Fatou's Lemma, we can deduct the following corollary if we consider a fixed $x$. 
\begin{corollary}\label{measofcrit}
Let  $u \in C^{1}(\Bar{\Omega})$ be a weak solution of (\ref{1.1}), then $|Z_u|=0$ and for $s=p,q$ and  every fixed $x \in \Omega$, 
\begin{equation*}
    \begin{aligned}
    \int_{\Omega} \frac{|Du|^{s-2-\beta}|u_{x_{i}x_{j}}|^2}{|x-y|^\gamma} dy \leq C
    \end{aligned}
\end{equation*}
where $\beta<1$ ,$\gamma<
N-2$ if $N\geq 3$ and $\gamma=0 $ if $N=2$. In particular 
\begin{equation*}
    \begin{aligned}
    \int_{\Omega} |Du|^{s-2-\beta}|u_{x_{i}x_{j}}|^2 \leq C
    \end{aligned}
\end{equation*}
\end{corollary}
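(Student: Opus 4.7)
My plan is to extract both assertions simultaneously by passing to the limit $\epsilon\downarrow 0$ via Fatou's lemma in the uniform-in-$\epsilon$ estimate
\begin{equation*}
\int_\Omega a_s\,\frac{|Du|^{s-2}}{|u_{x_i}|^\beta\,|x-y|^\gamma}\,|\tilde{D}u_i|^2\,\Bigl(G_\epsilon'(u_{x_i}) - (\beta+\sigma_s)\tfrac{G_\epsilon(u_{x_i})}{u_{x_i}}\Bigr)\,\varphi\,dy \;\le\; K
\end{equation*}
that was established inside the proof of Lemma \ref{lu} for $s=p,q$, with $K$ independent of $\epsilon$ and of the fixed point $x\in\Omega$. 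As $\epsilon\downarrow 0$ the bracketed factor converges pointwise on $\{u_{x_i}\ne 0\}$ to the strictly positive constant $1-(\beta+\sigma_s)$; on $\{u_{x_i}=0\}$ it vanishes identically, since $G_\epsilon\equiv 0$ and $G_\epsilon'\equiv 0$ on $[-\epsilon,\epsilon]$. Combined with the pointwise bound $|u_{x_i}|^{-\beta}\ge|Du|^{-\beta}$ (which is valid because $|u_{x_i}|\le|Du|$ and $\beta>0$) and the convention $u_{x_ix_j}=0$ on $Z_u$ from Definition \ref{grad}, Fatou's lemma delivers
\begin{equation*}
\int_\Omega\frac{|Du|^{s-2-\beta}\,|u_{x_ix_j}|^2}{|x-y|^\gamma}\,dy\;\le\;C
\end{equation*}
after summing over $j$. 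Setting $\gamma=0$ yields the ``in particular'' form.

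\textbf{Proof of $|Z_u|=0$.} To establish $|Z_u|=0$ I will argue by contradiction. Suppose $|Z_u|>0$. Choosing $\beta\in(0,1)$ sufficiently close to $1$ forces $s-2-\beta<0$ whenever $s\le 2$, so the weight $|Du|^{s-2-\beta}$ is formally $+\infty$ on $Z_u$; the finiteness of the integral just obtained therefore forces $|u_{x_ix_j}|=0$ a.e.\ on $Z_u$ (independently, this agrees with the Stampacchia-type conclusion that the weak derivatives of $Du$ vanish a.e.\ on the set where $Du$ is constant, whose use is justified by the weighted $L^2$-control produced above). Substituting $Du\equiv 0$ and $D^2u\equiv 0$ a.e.\ on $Z_u$ into (\ref{1.1}) makes the distributional left-hand side $-\mathrm{div}(|Du|^{p-2}Du+a(x)|Du|^{q-2}Du)$ vanish a.e.\ on $Z_u$, whereas Assumption \ref{h2} (namely $g,h\ge c>0$, $f\ge 0$, together with $u>0$ in $\Omega$) gives
\begin{equation*}
\frac{g(x)}{u^\delta}+h(x)f(u)\;\ge\;\frac{c}{u^\delta}\;>\;0\qquad\text{in }\Omega.
\end{equation*}
This is a strict contradiction on a set of positive measure, so $|Z_u|=0$.

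\textbf{Main obstacle.} The delicate point is the rigorous identification of the distributional $(p,q)$-Laplacian as pointwise zero on $Z_u$, particularly in the subquadratic regime $1<p<2$, where $|Du|^{p-2}$ is singular as $Du\to 0$. The quantitative rate at which $|D^2u|$ vanishes near $Z_u$ (encoded in the weighted bound above with $\beta\uparrow 1$) is what absorbs this singularity and allows the identification to go through, in the spirit of \cite[Theorem 2.1]{DS04}; a secondary technicality is the passage from the cutoff version $\int\cdots\varphi$ to the full integral over $\Omega$, which is handled by an increasing exhaustion and monotone convergence while leveraging that $u$ stays bounded below and away from zero on each $E\subset\subset\Omega$.
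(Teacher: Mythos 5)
Your derivation of the two integral bounds is the same as the paper's: pass to the limit $\epsilon\downarrow 0$ in the uniform estimate from the proof of Lemma \ref{lu} via Fatou, use $|u_{x_i}|^{-\beta}\ge |Du|^{-\beta}$ together with the convention $\tilde u_{ij}=0$ on $Z_u$, and take $\gamma=0$ for the second display. Two small slips there: one \emph{discards}, rather than sums, the remaining terms of $|\tilde Du_i|^2=\sum_j \tilde u_{ij}^2$; and Fatou only controls the integral over $\{u_{x_i}\neq 0\}$, so one must still observe that $\{u_{x_i}=0\}\setminus Z_u$ contributes nothing (it has measure zero where $\nabla u_{x_i}\neq 0$, and $u_{x_ix_j}=0$ where $\nabla u_{x_i}=0$).

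The genuine gap is in your argument for $|Z_u|=0$. First, the claim that finiteness of $\int |Du|^{s-2-\beta}|u_{x_ix_j}|^2$ ``forces $|u_{x_ix_j}|=0$ a.e.\ on $Z_u$'' is vacuous: on $Z_u$ the solution is only $C^1$, the classical second derivatives do not exist there, and the generalized ones $\tilde u_{ij}$ are \emph{defined} to vanish there (Definition \ref{grad}); the bound is really an integral over $\Omega\setminus\{u_{x_i}=0\}$ and carries no information about $Z_u$. Moreover your device of pushing $\beta\uparrow 1$ so that $s-2-\beta<0$ only applies when $s\le 2$, whereas $p,q$ range over all of $(1,\infty)$; for $p>2$ the weight vanishes on $Z_u$ rather than blowing up. Second --- the step you yourself flag as the ``main obstacle'' but do not close --- to conclude that the distributional left-hand side of \eqref{1.1} vanishes a.e.\ on $Z_u$ you need the flux $V:=|Du|^{p-2}Du+a(x)|Du|^{q-2}Du$ to lie in $W^{1,1}_{loc}(\Omega,\mathbb{R}^N)$, so that $\operatorname{div}V$ is an $L^1_{loc}$ function computed pointwise a.e., and so that Stampacchia's theorem can be applied directly to $V$ (which vanishes on $Z_u$ since $p,q>1$) --- not to $Du$, which is what sidesteps the chain-rule problem with the singular weight $|Du|^{p-2}$. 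That Sobolev membership is exactly the content of the paper's \emph{subsequent} corollary ($|Du|^{s-2}Du\in W^{1,2}(\Omega,\mathbb{R}^N)$) and does not follow from the weighted bound by the informal appeal to ``$\beta\uparrow 1$ absorbing the singularity.'' Your contradiction scheme can be completed once that corollary is available (its proof does not use $|Z_u|=0$, so there is no circularity), but as written the decisive analytic step is missing. For what it is worth, the paper itself extracts $|Z_u|=0$ directly from the Fatou passage with essentially no justification on this same point, so the issue you identified is real; it simply is not resolved by what you wrote.
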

We can conclude our analysis once we prove the following result.
\begin{corollary}
If $u \in C^1(\Bar{\Omega})$ is a weak solution of \eqref{1.1}  then $Z_u \cap \partial\Omega=\emptyset ,$ $|Du|^{s-2}Du \in W^{1,2}(\Omega, \mathbb{R}^N)$, therefore $|Du|^{s-1} \in W^{1,2}(\Omega)$ for $s=p$ or $q$.
\end{corollary}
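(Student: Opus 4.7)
The first assertion is immediate: Hopf's boundary lemma (Theorem~\ref{hpfe}), applied with $\nu = n$, gives $\partial u/\partial n > 0$ in a boundary neighbourhood, so $|Du| > 0$ on $\partial\Omega$ and $Z_u \cap \partial\Omega = \emptyset$.

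For the second assertion, the plan is to show that for each $i$ the component $V^{(s)}_i := |Du|^{s-2} u_{x_i}$ belongs to $W^{1,2}(\Omega)$ with weak derivative exactly the pointwise expression from Definition~\ref{grad},
\[
W^{(s)}_{ij} := |Du|^{s-2}\tilde u_{ij} + (s-2)|Du|^{s-4}(Du,\tilde D u_j)\,u_{x_i},
\]
extended by $0$ on $Z_u$. The $L^2$ half is the easier: $V^{(s)}_i \in L^\infty(\Omega)$ follows at once from $u \in C^{1,\alpha}(\overline{\Omega})$, while the pointwise bound $|W^{(s)}_{ij}| \le (1 + |s-2|)\,|Du|^{s-2}\|D^2 u\|$ on $\Omega \setminus Z_u$, combined with Corollary~\ref{measofcrit} applied with $\beta := \max(0, 2-s) < 1$ (permissible since $s > 1$), yields $\|W^{(s)}_{ij}\|_{L^2(\Omega)} \le C$.

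The crux, and the main obstacle, is identifying $W^{(s)}_{ij}$ as the distributional derivative of $V^{(s)}_i$. I would cut off a neighbourhood of $Z_u$ via $\psi_\epsilon(x) := T_\epsilon(|Du(x)|^2)$, where $T_\epsilon \in C^\infty([0, \infty))$ vanishes on $[0, \epsilon^2]$, equals $1$ on $[4\epsilon^2, \infty)$, and satisfies $|T_\epsilon'| \le C\epsilon^{-2}$. Since $\psi_\epsilon$ vanishes on the open set $\{|Du| < \epsilon\}$ and $u \in C^2$ wherever $|Du| > 0$, the cutoff lies in $C^1(\Omega)$ with $\supp \psi_\epsilon \subset \Omega \setminus Z_u$, and classical integration by parts against any $\phi \in C_c^\infty(\Omega)$ yields
\[
\int_\Omega V^{(s)}_i\,\phi_{x_j}\,\psi_\epsilon\,dy \;=\; -\int_\Omega W^{(s)}_{ij}\,\phi\,\psi_\epsilon\,dy \;-\; \int_\Omega V^{(s)}_i\,\phi\,(\psi_\epsilon)_{x_j}\,dy.
\]
The first two terms are routine: dominated convergence (using $|Z_u| = 0$, $V^{(s)}_i \in L^\infty$, $W^{(s)}_{ij} \in L^2$) sends them to $\int V^{(s)}_i\phi_{x_j}$ and $-\int W^{(s)}_{ij}\phi$ respectively. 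The delicate point is the error term: on the annulus $A_\epsilon := \{\epsilon < |Du| \le 2\epsilon\}$ one has $|V^{(s)}_i| \le (2\epsilon)^{s-1}$ and $|(\psi_\epsilon)_{x_j}| \le C\epsilon^{-1}|\tilde D u_j|$, so Cauchy--Schwarz bounds the error by a constant multiple of $\epsilon^{s-2}|A_\epsilon|^{1/2}\bigl(\int_{A_\epsilon}|\tilde D u_j|^2\,dy\bigr)^{1/2}$. Exploiting $|Du| \asymp \epsilon$ on $A_\epsilon$, the estimate $\int_{A_\epsilon} |Du|^{s-2-\beta}|\tilde D u_j|^2 \le C$ from Corollary~\ref{measofcrit} gives $\int_{A_\epsilon}|\tilde D u_j|^2 \le C\epsilon^{\beta - s + 2}$, and the $\epsilon$-powers then compose to a harmless exponent $(s - 2 + \beta)/2 \ge 0$, leaving a bound of at most order $|A_\epsilon|^{1/2}$ which vanishes since $|A_\epsilon| \to |Z_u| = 0$ by dominated convergence. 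This identifies $W^{(s)}_{ij}$ as the weak derivative, and consequently $|Du|^{s-2}Du \in W^{1,2}(\Omega, \mathbb{R}^N)$.

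The final claim is then formal: $|Du|^{s-1}$ is the Euclidean norm of the $W^{1,2}$ vector field $|Du|^{s-2}Du$, and the standard chain rule for Sobolev maps (the norm $|\cdot|:\mathbb{R}^N \to \mathbb{R}$ is $1$-Lipschitz) transfers $W^{1,2}$-regularity from the vector field to its magnitude.
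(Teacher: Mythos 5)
Your argument is correct, and in the decisive step it takes a genuinely different route from the paper. Both proofs rest on the same weighted Hessian estimate $\int |Du|^{s-2-\beta}\|D^2u\|^2 \le C$ (Corollary~\ref{measofcrit}, resp.\ \eqref{est}), and both dispose of $Z_u\cap\partial\Omega=\emptyset$ via Hopf's lemma. But where you cut off in the \emph{domain} --- excising the set $\{|Du|\le\epsilon\}$ with $\psi_\epsilon=T_\epsilon(|Du|^2)$, integrating by parts classically on $\{|Du|\ge\epsilon\}$, and killing the commutator term on the annulus $A_\epsilon$ by the explicit exponent bookkeeping $\epsilon^{s-2}\cdot\epsilon^{(2+\beta-s)/2}\cdot|A_\epsilon|^{1/2}\to 0$ --- the paper instead truncates the \emph{range}: it sets $\phi_n=G_{1/n}(|Du|^{s-2}u_{x_i})$, shows $\|\phi_n\|_{W^{1,2}(E)}\le K$ uniformly, and extracts the conclusion from weak compactness together with the compact embedding $W^{1,2}(E)\hookrightarrow L^2(E)$ and a.e.\ convergence $\phi_n\to|Du|^{s-2}u_{x_i}$. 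Your approach buys an explicit identification of the weak derivative as the extension-by-zero $W^{(s)}_{ij}$ of Definition~\ref{grad} (the paper's compactness argument only identifies the limit function, not its gradient, without a further step), at the price of the more delicate annulus estimate and the careful choice $\beta=\max(0,2-s)$; the paper's route is softer and shorter but less constructive. One cosmetic point: $|A_\epsilon|\to 0$ not because $A_\epsilon$ ``converges to $Z_u$'' but because $A_\epsilon\subset\{0<|Du|\le 2\epsilon\}$ and these sets decrease to $\emptyset$; the conclusion is nevertheless right. Your closing step (Lipschitz chain rule applied to the Euclidean norm) matches the paper's implicit passage from $|Du|^{s-2}Du$ to $|Du|^{s-1}$.
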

\begin{proof}
  Only using test functions with compact support $\Omega\setminus Z_u$ in lemma \ref{lu} we get 
\begin{equation}\label{est}
    \begin{aligned}
     \int_{E\setminus Z_u}|Du|^{s-2-\beta} \| D^2u\|^2 dx \leq C
    \end{aligned}
\end{equation}
where $\beta < 1$ and E is any compact set contained in $\Omega$ for $C\geq0$. So by taking  $$\phi_n \equiv G_{\frac{1}{n}}(|Du|^{s-2}u_{x_i}),$$ where $G_{\frac{1}{n}}$ is defined in \eqref{defG}, we have 
 \begin{equation*}
     \begin{aligned}
       \frac{\partial}{\partial x_j} \phi_n  
      :=  G_{\frac{1}{n}}'(|Du|^{s-2}u_{x_i})\frac{\partial}{\partial  x_j}(|Du|^{s-2}u_{x_i}) & 
       \\
      = G_{\frac{1}{n}}'(|Du|^{s-2}u_{x_i}) \bigg(|Du|^{s-2}\Tilde{u}_{ij}&+(s-2)|Du|^{s-4}(Du,\Tilde{D}u_j)u_{x_i}\bigg) 
     \end{aligned}
 \end{equation*}  
Note that $\phi_n \in W^{1,2}(E)$. Utilizing \eqref{est}, along with the fact that $u \in C^{2}(\Omega\setminus Z_u)$ and Definition \ref{grad}, we can obtain a uniform bound $K\geq0$ such that $\|\phi_n\|_{W^{1,2}(E)} \leq K$ for each $n\in\mathbb{N}$. Since $W^{1,2}(E) \dhookrightarrow L^2(E)$ we have $\phi_n \rightarrow h$ strongly in $L^2(E)$ upto some subsequence where $h\in W^{1,2}(E)$. Then we can get that the convergence is almost everywhere in $E$. Now $\phi_n \longrightarrow |Du|^{s-2}u_{x_i}$  a.e. in $E$ hence $|Du|^{s-2}Du \in W_{loc}^{1,2}(\Omega, \mathbb{R}^N)$. By Hopf's lemma we have $Z_u \cap \partial \Omega = \emptyset$. Also we know that $u\in C^2(\Bar{\Omega}\setminus Z_u)$. So we get the regularity of $u$ near the boundary. Hence we can conclude $|Du|^{s-2}Du \in W^{1,2}(\Omega, \mathbb{R}^N)$. 
\end{proof}
We shall conclude this section by discussing the connectedness of $\Omega \setminus Z_u$.
\begin{theorem}
Let $u \in C^1(\Bar{\Omega})$ be a weak solution of \eqref{1.1}. Then $\Omega \setminus Z_u$ is connected i.e. $\Omega \setminus Z_u$ does not contain any connected component $\mathcal{C}$ such that $\Bar{\mathcal{C}} \subset \Omega$.
\end{theorem}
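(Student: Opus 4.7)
The strategy will be to assume for contradiction that there exists a connected component $\mathcal{C}$ of $\Omega\setminus Z_u$ with $\overline{\mathcal{C}}\subset\Omega$, and derive a contradiction from \eqref{1.1} combined with the $W^{1,2}$-regularity of $|Du|^{s-2}Du$ proved above. First I would note that by continuity of $Du$ and the maximality of $\mathcal{C}$ as a component of $\{|Du|>0\}$, every $x\in\partial\mathcal{C}$ lies in $Z_u$, so $|Du|\equiv 0$ on $\partial\mathcal{C}$; moreover, since any interior extremum of $u$ on the compact set $\overline{\mathcal{C}}$ would necessarily be a critical point, the extrema of $u|_{\overline{\mathcal{C}}}$ are attained on $\partial\mathcal{C}$, whence $u\geq c_0>0$ on $\overline{\mathcal{C}}$, making $g/u^\delta$ bounded there.

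Next I would exhaust $\mathcal{C}$ by the superlevel sets $\mathcal{C}_\epsilon:=\{x\in\mathcal{C}:|Du(x)|^2>\epsilon\}$. By Sard's theorem, for a.e.\ $\epsilon>0$ the level set $\partial\mathcal{C}_\epsilon=\{|Du|^2=\epsilon\}\cap\mathcal{C}$ is a smooth hypersurface. Setting $\mathbf{F}:=|Du|^{p-2}Du+a|Du|^{q-2}Du$ and using the identity $-\operatorname{div}(\mathbf{F})=g/u^\delta+hf(u)$ from \eqref{1.1}, the classical divergence theorem on $\mathcal{C}_\epsilon$ yields
\[
\int_{\mathcal{C}_\epsilon}\Bigl(\tfrac{g}{u^\delta}+hf(u)\Bigr)\,dx\;=\;-\int_{\partial\mathcal{C}_\epsilon}\mathbf{F}\cdot\nu\,dS.
\]
The left-hand side converges to $\int_{\mathcal{C}}(g/u^\delta+hf(u))\,dx>0$ (by $(H_g)$) as $\epsilon\to 0$, while on $\partial\mathcal{C}_\epsilon$ the bound $|\mathbf{F}|\leq\epsilon^{(p-1)/2}+\|a\|_\infty\epsilon^{(q-1)/2}$ controls the right-hand side by $C\bigl(\epsilon^{(p-1)/2}+\epsilon^{(q-1)/2}\bigr)\mathcal{H}^{N-1}(\partial\mathcal{C}_\epsilon)$.

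The technical heart of the argument is to produce a sequence $\epsilon_k\to 0$ of regular values along which $\epsilon_k^{(p-1)/2}\mathcal{H}^{N-1}(\partial\mathcal{C}_{\epsilon_k})\to 0$. For this I would invoke the weighted coarea identity
\[
\int_{\mathcal{C}}|Du|^{p-1}\,\bigl|D|Du|^2\bigr|\,dx\;=\;\int_0^{\infty} t^{(p-1)/2}\,\mathcal{H}^{N-1}\bigl(\{|Du|^2=t\}\cap\mathcal{C}\bigr)\,dt,
\]
together with the pointwise bound $\bigl|D|Du|^2\bigr|\leq 2|Du||D^2u|$ and Hölder's inequality combined with Corollary~\ref{measofcrit} to conclude that the left-hand side is finite. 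Consequently $t\mapsto t^{(p-1)/2}\mathcal{H}^{N-1}(\{|Du|^2=t\})$ lies in $L^1(0,\eta)$, and a Chebyshev-type argument together with Sard's theorem supplies the required regular values $\epsilon_k$; since $p\leq q$, the $q$-exponent is handled simultaneously via $\epsilon_k^{(q-1)/2}=\epsilon_k^{(q-p)/2}\epsilon_k^{(p-1)/2}$. Passing to the limit then forces $\int_{\mathcal{C}}(g/u^\delta+hf(u))\,dx=0$, the desired contradiction. The main obstacle is this extraction of $\{\epsilon_k\}$, which depends crucially on the $W^{1,2}$-regularity encoded in Corollary~\ref{measofcrit}.
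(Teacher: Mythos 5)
Your overall strategy is legitimately different from the paper's: the paper never touches level sets, but instead uses the fact (from the preceding corollary) that $|Du|^{s-2}Du\in W_0^{1,2}(\mathcal{C},\mathbb{R}^N)$ on a compactly contained component $\mathcal{C}$, approximates $|Du|^{p-2}Du$ and $a|Du|^{q-2}Du$ by $C_0^\infty$ fields $A_n,B_n$, and tests the divergence identity with a nonzero constant to force $\int_{\mathcal{C}}\bigl(g/u^\delta+hf(u)\bigr)=0$. Both routes aim at the same contradiction with positivity of the right-hand side, so the target is right. However, your version has a genuine gap at what you yourself call the technical heart.

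The problem is the extraction of the sequence $\epsilon_k$. From the weighted coarea identity with weight $|Du|^{p-1}$ you only obtain
\[
\int_0^{\eta} t^{(p-1)/2}\,\psi(t)\,dt<\infty,\qquad \psi(t):=\mathcal{H}^{N-1}\bigl(\{|Du|^2=t\}\cap\mathcal{C}\bigr),
\]
and integrability of a nonnegative function on $(0,\eta)$ does \emph{not} imply that it has essential liminf zero at $0$ (take $\psi(t)=t^{-(p-1)/2}$, so the integrand is identically $1$). A Chebyshev bound controls the measure of $\{t:\phi(t)>\lambda\}$ by $\|\phi\|_{L^1}/\lambda$, which near $t=0$ is not smaller than the length of the interval you are searching in; so no sequence with $\epsilon_k^{(p-1)/2}\psi(\epsilon_k)\to 0$ is produced. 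What your argument actually yields is only $\epsilon_k^{(p+1)/2}\psi(\epsilon_k)\to 0$, which does not kill the boundary term. The fix is to run the coarea formula with the weight $|Du|^{p-3}\bigl|D|Du|^2\bigr|\le 2|Du|^{p-2}|D^2u|$, i.e.\ to show $\int_0^\eta t^{-1}\cdot t^{(p-1)/2}\psi(t)\,dt<\infty$; the extra factor $t^{-1}$ then does force $\operatorname{ess\,lim\,inf}_{t\to0}t^{(p-1)/2}\psi(t)=0$ by the dyadic-interval argument. The finiteness of $\int_{\mathcal{C}}|Du|^{p-2}|D^2u|$ still follows from Corollary~\ref{measofcrit} via Cauchy--Schwarz, provided that for $1<p<2$ you choose $\beta\in[2-p,1)$ so that the complementary power $|Du|^{p-2+\beta}$ is nonnegative, hence bounded. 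Separately, your appeal to Sard's theorem is not justified: $|Du|^2$ is only $C^1$ on $\mathcal{C}$ (and at best $C^{1,\alpha}$ beyond that, since $f,g,h,a$ are merely $C^1$), while Sard for real-valued functions on $\mathbb{R}^N$ requires $C^N$ regularity; you should instead use that for a.e.\ $\epsilon$ the superlevel set $\mathcal{C}_\epsilon$ has finite perimeter and apply the generalized Gauss--Green theorem on sets of finite perimeter, with $\mathcal{H}^{N-1}(\partial^*\mathcal{C}_\epsilon)\le\psi(\epsilon)$. With these two repairs your proof goes through; as written, it does not.
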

\begin{proof}
  If $\mathcal{C}$ be a connected component of $\Omega \setminus Z_u$ such that $\mathcal{C} \subset\subset \Omega$ then $Du(x)=0 \ \forall \,  x \in \partial \mathcal{C} .$ Exploiting the $H_a$ property of $a(x)$ and the fact that $|Du|^{s-2} Du \in W_0^{1,2}(\mathcal{C},\mathbb{R}^N)$ for $s=p$ or $q$, we could get the existence of $A_n, B_n \in C_0^\infty(\mathcal{C},\mathbb{R}^N)$ such that
  \begin{equation*}
      \begin{aligned}
      & A_n \longrightarrow |Du|^{p-2} Du
      \\
      & B_n \longrightarrow a(x)|Du|^{q-2} Du
      \end{aligned}
  \end{equation*}
  in the norm of $W_0^{1,2}(\mathcal{C}, \mathbb{R}^N).$ If $\supp A_n \subset \subset E_1 \subset \subset \mathcal{C}$ and $\supp B_n \subset \subset E_2 \subset \subset \mathcal{C}$ for $E_1,E_2$ being smooth subsets then using divergence theorem we have the following for an outside normal $\mathcal{n}$ and $\phi \in W^{1,2}$
  \begin{equation*}
      \begin{aligned}
          \int_\mathcal{C} div (A_n+B_n)\phi + (A_n+B_n,D \phi) dx
     =\int_{\partial E_1}\phi(A_n,\mathcal{n}) d\sigma+\int_{\partial E_2}\phi(B_n,\mathcal{n}) d\sigma =0
      \end{aligned}
  \end{equation*}
 Since $|Z_u|=0$ we get that $-div(|\nabla u|^{p-2}\nabla u+ a(x) |\nabla u|^{q-2}\nabla u)  = \frac{g(x)}{u^\delta}+h(x)f(u) $ a.e. in $\mathcal{C}$. Choose $\phi \equiv k \neq 0$ to get 
 \begin{equation*}
     \begin{aligned}
         \int_\mathcal{C} \frac{g(x)}{u^\delta}+h(x)f(u) \phi &= \int_\mathcal{C} -div(|\nabla u|^{p-2}\nabla u+ a(x) |\nabla u|^{q-2}\nabla u) \phi 
         \\
         &= \lim_{n \longrightarrow \infty } \int_\mathcal{C} -div (A_n+B_n) \phi =0
     \end{aligned}
 \end{equation*}
 This gives a contradiction to the fact that  $\frac{g(x)}{u^\delta}+h(x)f(u)$ is positive. Thanks to Hopf's lemma, a neighbourhood of boundary belongs to  $\mathcal{C}$ of $\Omega\setminus Z_u$. If there exists another compactly contained component $\mathcal{C}'$ then it would be a contradiction to what we have just proved. Hence $\Omega \setminus Z_u$ is connected. 
\end{proof}
\section{Symmetry And Monotonicity}\label{symandmon}
In this section, we prove the main theorem, which is the symmetry result. The existence of $C^{1,\alpha}(\Bar{\Omega})$ solutions to problem $\eqref{1.1}$ is guaranteed by Assumption \ref{h1}, as shown in \cite{GKS21}. Here, we provide a detailed discussion of the moving plane method. Without loss of generality, we demonstrate the symmetry and monotonicity results in the $x_1$ direction. For a ball, we can apply the same analysis in any given direction, which yields the radial and radially decreasing nature of the solutions. We begin with introducing some notations.
\\
\underline{\textit{Notations:}}
\\
We define $a:=\inf_{x \in \Omega} \ x_1$. For $\lambda \in \mathbb{R}$ with $ a < \lambda< 0$ we set $\Omega_\lambda:=\{x \in \Omega:x_1< \lambda \}$. The reflection of $x \in \Omega_\lambda$ with respect to $T_\lambda:=\{ x \in \mathbb{R}^N :  x_1=\lambda \}$ is denoted by $x_\lambda:=R_\lambda(x)=(2\lambda-x_1,x_2,...,x_n)$. We set $u_\lambda(x)=u(x_\lambda)$. Finally we define 
\begin{equation}
    \begin{aligned}
        \Lambda_0:=\{a<\lambda<0:u \leq u_t \ \text{in} \ \Omega_t \ \text{for \ all \ } t\in(a,\lambda] \}
    \end{aligned}
\end{equation}
\newpage
\underline{\textbf{Proof of Theorem \ref{main}}}
\begin{proof}
 Our first claim is that $\Lambda_0 \neq \emptyset$. Using theorem \ref{hpfe}, we can establish that $\frac{\partial u}{\partial x_1}>0$ in $\Omega_\lambda\cup R_\lambda(\Omega_\lambda)$ for small $\lambda-a$. Consequently, $u<u_\lambda$ in $\Omega_\lambda$ (near the boundary). This implies that $\Lambda_0$ is not an empty set when $\lambda-a$ is small. We can then define $\lambda_0=\sup \Lambda_0$.
\vspace{0.4cm}
\\
\underline{\textsc{Claim:}}
  $\lambda_0=0$ i.e. $u \leq u_\lambda$ in $\Omega_\lambda$ for every $\lambda \in (a,0]$. By contradiction, if $\lambda_0<0$ then we shall show $u \leq u_{\lambda_0+\tau}$ in $\Omega_{\lambda_0+\tau}$ for any $0<\tau<\Bar{\tau}$ for some $\Bar{\tau}>0$ (small).
\vspace{0.4cm}
\\ 
We get $u \leq u_{\lambda_0}$ in $\Omega_{\lambda_0}$ using continuity. It implies that $ h(x)f(u(x))\leq h(x_{\lambda_0})f(u(x_{\lambda_0}))$ by $H_h$ and $H_f$ of Assumption \ref{h2} . Using the mean value theorem and $H_g$, for some $C \geq 0$ we get
\begin{equation*}
    \begin{aligned}
    \frac{g(x)}{u(x)^\delta} - \frac{g(x_{\lambda_0})}{u(x_{\lambda_0})^\delta}  
\leq & g(x_{\lambda_0}) \big{(} \frac{1}{u(x)^\delta} - \frac{1}{u(x_{\lambda_0})^\delta}\big{)}  \leq C (u(x_{\lambda_0})-u(x)). 
    \end{aligned}
\end{equation*}
So we can use SCP (Theorem \ref{scp}) for $\Omega\setminus Z_u$ to conclude that $u < u_{\lambda_0}$ in $\Omega_{\lambda_0}\setminus Z_u$ unless $u \not\equiv u_{\lambda_0}$. Next, we will elaborate on the reasons why the last situation is not feasible.
\\
Let $\mathcal{C}$ be any connected component of $\Omega\setminus Z_u$. If $\partial\mathcal{C}\cap\partial\Omega\neq\emptyset$ then $u \equiv u_\lambda$ is not possible due to the zero Dirichlet boundary condition and positivity of $u$ in the interior of the domain. Else if $\partial\mathcal{C}\cap\partial\Omega=\emptyset$, then we get a contradiction to the fact that $\Omega\setminus Z_u$ is connected using local symmetry region. 
\\
Using uniform continuity, it can be shown that for any compact set $K\subset \Omega_{\lambda_0}\setminus Z_u$ and any small $\overline{\tau}>0$, $u<u_{\lambda_0 +\tau}$ in $K$ for $0<\tau<\overline{\tau}$. By applying theorem \ref{hpfe}, we can find a  $\delta>0$, such that $u <u_{\lambda_0 +\tau}$ in $I_\delta(\partial \Omega) \cap \Omega_{\lambda_0+\tau}$. Furthermore, as a result of Hopf's Lemma (Theorem \ref{hpfe}), we can also achieve the same outcome in a neighborhood of $\partial \Omega \cap T_{\lambda_0+\tau}$, following a similar idea that in \cite[Page no 20]{ES20}.
\\
Since $|Z_u|=0$ (Corollary \ref{measofcrit}), there exists a compact set $K$ large enough and $\tau$ small enough so that $|\Omega_{\lambda_0+\tau}\setminus K | < \tau$ for any fixed $\tau>0$. Setting $w_{\lambda_0+\tau}:=(u - u_{\lambda_0+\tau} )^+$ for any $0<\tau<\overline{\tau}$ we have $\supp w_{\lambda_0+\tau}\subset\subset \Omega_{\lambda_0+\tau}$ for $\Bar{\tau}$ small. Moreover, $w_{\lambda_0+\tau}=0$ in $K$ by construction. Take $\Bar{\tau}$ sufficiently small in order to guarantee the applicability of weak  comparison  principle for narrow domain (Theorem \ref{wcp}).
\\
Now using $H_g$ and $H_h$ (Assumption \ref{h2}), we have
\begin{equation*}
    \begin{aligned}
    \frac{g(x)}{u(x)^\delta} -\frac{g(x_\lambda)}{u(x_\lambda)^\delta}+h(x)f(u(x))-h(x_\lambda)f(u(x_\lambda)) \leq g(x_\lambda)\Bigg( \frac{1}{u^\delta}-\frac{1}{u_\lambda^\delta}&\Bigg)+h(x_\lambda) \bigg( f(u)-f(u_\lambda)\bigg)
    \\
    &
    \leq \|h_\lambda\|_\infty \bigg( f(u)-f(u_\lambda)\bigg).
    \end{aligned}
\end{equation*}
Here we use the facts that $\int_{[u \geq u_\lambda]} g(x_\lambda)\big( \frac{1}{u^\delta}-\frac{1}{u_\lambda^\delta}\big)(u-u_\lambda) \leq 0$ and $0 < h(x_\lambda)\leq \|h_\lambda\|_\infty$. 
By applying theorem \ref{wcp}, we can infer that $w_{\lambda_0+\tau}=0$  in $\Omega_{\lambda_0+\tau}$ for any $0<\tau<\overline{\tau}$, where $\overline{\tau}>0$ is small. This contradicts the definition of $\lambda_0$, so we can conclude that $\lambda_0$ must be equal to zero. Consequently, we can obtain the desired symmetry result in the $x_1$ direction by applying the moving plane procedure in the opposite direction.
Lemma \ref{hpfe} will imply that $\frac{\partial u}{\partial x_1}<0$, that gives the monotonicity in $x_1$ direction.
\end{proof}
\section{Strong Comparison Principle}\label{SCPsec}
In this section, we are looking for an SCP of the following equations. 
\begin{equation} \label{e1}
    \begin{aligned}
    -\Delta_p u_1 -\Delta_q u_1= \frac{c}{u_{1}^\delta}+h_1(x) \; \text{in} \; \Omega \hspace{0.49cm}\text{and}\hspace{0.49cm} u_1=0 \; \text{on} \; \partial\Omega
    \end{aligned}
\end{equation}
\begin{equation} \label{e2}
    \begin{aligned}
    -\Delta_p u_2 -\Delta_q u_2= \frac{c}{u_{2}^\delta}+h_2(x)  \; \text{in} \; \Omega \hspace{0.49cm}\text{and}\hspace{0.49cm} u_2=0 \; \text{on} \; \partial\Omega 
    \end{aligned}
\end{equation}
where $\Omega \equiv B_R(0)$ and $h_1, h_2 $ are continuous radial functions on $B_R$. We assume $0 \leq h_1 \leq h_2$  but $h_1 \not \equiv h_2$ in $\Omega$ and $c>0$. The existence of the positive solutions is guaranteed in \cite{GKS21}. Since $h_1 \leq h_2$, we have $u_1 \leq u_2$ using test function method. We are interested in proving $0< u_1 < u_2$ in $\Omega$ and $\frac{\partial u_1}{\partial \nu}> \frac{\partial u_2}{\partial \nu}$ on $\partial\Omega$. Here we use a technique similar to that in \cite{DIJ23}. For the sake of completeness, we briefly discuss the underlying idea of the proof. 
\\
\underline{\textbf{Proof of Theorem \ref{SCPfleqg}}}
  \begin{proof}
 We  define  $w=u_2-u_1$ and  $U_{r}:= B_{R}\setminus B_{r}$ for any $0<r<R$ where $B_R(0)\equiv B_R$. We rewrite the equations \eqref{e1} and \eqref{e2} as follows
    \begin{equation}
    \left\{
        \begin{aligned}
        -div(A(x)\nabla w)- \lambda B(x)w & \geq h_2-h_1 \geq 0 \hspace{3mm} \text{in}\ U_{r}
        \\
		w& \geq  0 \hspace{2.1cm} \text{on}\ \partial U_{r}
        \end{aligned}
        \right.
    \end{equation}
    for a matrix $A(x)=[a_{ij}(x)]$ and a scalar function $B(x).$ Here, 
    \begin{equation*}
        \begin{aligned}
        a_{ij}(x)=\int_{0}^{1} \sum_{s=p,q} |(1-t)\nabla u_1(x)+t\nabla u_2(x)|^{s-2}\Big\{&\delta_{ij}
        \\+(s-2)&\frac{((1-t)u_{1_{i}}+tu_{2_{i}})((1-t)u_{1_{j}}+tu_{2_{j}})}{|(1-t)\nabla u_1(x)+t\nabla u_2(x)|^{2}}\Big\}dt\\ \nonumber
        \end{aligned}
    \end{equation*}
    and
    \begin{equation*}
        \begin{aligned}
        B(x)=-\delta \int_{0}^{1}\frac{dt}{((1-t)u_1(x)+tu_2(x))^{\delta +1}}
        \end{aligned}
    \end{equation*}
	To write in this form, we use the fact that $\frac{du_i}{dr}<0$ for $i=1,2$ near the boundary. Under the given assumption on $u_1$ and $u_2$ we get that $A(x)$ is uniformly elliptic in $U_r$ for each $r>0$. Using the continuity of $h_1$ and $h_2$, we can confirm the existence of $r_0>0$ such that $h_1-h_2\not\equiv 0$ in $U_{r_{0}}$ and hence $w>0$ in $U_r$ for all $r<r_0$  using \cite[Theorem 2.5.2]{PS07}.
	\\
	Now we shall show the positivity of $w$ in $B_{r_0}$. Using the radial symmetry of the solutions the problem \eqref{e1} can be reduced to a system of ODEs:
			\begin{equation}
   \left\{ 
   \begin{aligned}
	u_{11}^{'}=\alpha(r,u_{12}),\; u_{11}(r_{1})=u_{11,0}\\
			u_{12}^{'}=- \frac{N-1}{r}u_{12}+B_1(r,u_{11}),\; u_{12}(r_{1})=u_{12,0}
    \end{aligned}
    \right.
\end{equation}
		where $r_{1} \in (0,R)$, $u_{11}(r)=u_1(r)$, $u_{12}(r)=|u_1^{'}(r)|^{p-2}u_1^{'}(r)+|u_1^{'}(r)|^{q-2}u_1^{'}(r)$, $B_1(r,y)= -(\frac{c}{y^{\delta}}+h_1(r)) $. The function
		$\alpha(r,y):(0,R)\times \mathbb{R}\rightarrow \mathbb{R}$ is the inverse of the function $a(r,y)=|y|^{p-2}y+|y|^{q-2}y.$
		Clearly $u_{11}(R)=u_{12}(0)=0$. Analogously we can write the equation \eqref{e2} as
		\begin{equation}
   \left\{ 
   \begin{aligned}
	u_{21}^{'}=\alpha(r,u_{22}),\; u_{21}(r_{1})=u_{21,0}\\
			u_{22}^{'}=- \frac{N-1}{r}u_{22}+B_2(r,u_{21}),\; u_{22}(r_{1})=u_{22,0}
    \end{aligned}
    \right.
\end{equation}
where $u_{21}(R)=u_{22}(0)=0$ and $B_2(r,y)= -(\frac{c}{y^{\delta}}+h_2(r)) $. Since $a(y)$ is strictly monotone and continuous we have $\frac{\partial \alpha}{\partial y} \geq 0$. 
\\
Suppose that there exists some $r'<r_0$ such that $w(r')=0$ i.e. $w$ attains its minimum at $r'$ and hence $w'(r')=0$. Taking $r_1=r'$ in the systems of ODEs, 
$u_{11,0}= u_{21,0}$ and $u_{12,0}=u_{22,0}$. For the function $b(x,u)=c u^{-\delta}$, we have $0\leq -\frac{\partial b}{\partial u}\in L^{\infty}_{loc}((-R,R)\times (0,\infty)),$ and hence by using \cite[Lemma 3.2]{CT00}, we obtain $u_{2}(r) \leq u_{1}(r)\; \forall \; r \in [r_{1},R)$ which contradicts the fact that $w>0$ in $U_{r_0}. $ Therefore, $0<u_1<u_2$ in $B_{R}.$ Finally we note that since $w>0$ in $B_R$ we can apply \cite[Theorem 2.7.1]{PS07} to conclude that
		$\frac{\partial u_2}{\partial \nu}<\frac{\partial u_1}{\partial \nu}<0.$
  \end{proof}
We note that the hypothesis of Theorem \ref{SCPfleqg} could be modified as below and still the SCP holds.    
 \begin{theorem}\label{SCPfleqg1}
 Let $1<p \leq q \leq 2$ and  $h_1$, $h_2$ are continuous radial functions on $\Omega\equiv B_R(x_0)$ for $x_0\in \mathbb{R}^N$such that $ h_1 \leq h_2$  but $h_1 \not \equiv h_2$ in $\Omega$. Assume that $u_1,u_2 \in C^{1,\alpha}(\Bar{\Omega})$ such that $u_1\leq u_2$ and they are radially decreasing positive solutions of 
\begin{equation}\label{scpeqn}
    \begin{aligned}
            -\Delta_p u_1 -\Delta_q u_1= \frac{c}{u_{1}^\delta}+h_1(x) \; \text{in} \; \Omega \hspace{0.49cm}\text{and}\hspace{0.49cm} u_1=0 \; \text{on} \; \partial\Omega
            \\
                -\Delta_p u_2 -\Delta_q u_2= \frac{c}{u_{2}^\delta}+h_2(x))  \; \text{in} \; \Omega \hspace{0.49cm}\text{and}\hspace{0.49cm} u_2=0 \; \text{on} \; \partial\Omega
    \end{aligned}
\end{equation}
Then $ u_1 < u_2$  and $\frac{\partial u_2}{\partial \nu}<\frac{\partial u_1}{\partial \nu}<0$ in $\Omega$.
 \end{theorem}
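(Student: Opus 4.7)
The strategy is to reduce Theorem \ref{SCPfleqg1} to Theorem \ref{SCPfleqg} by a translation and a careful inspection of which hypotheses are actually used in the earlier proof. First I would set $\tilde{u}_i(x) := u_i(x + x_0)$ and $\tilde{h}_i(x) := h_i(x + x_0)$ so that, after translating, the problem \eqref{scpeqn} takes place on $B_R \equiv B_R(0)$, with all the qualitative hypotheses preserved: $\tilde{u}_i \in C^{1,\alpha}(\overline{B_R})$ are radially decreasing positive solutions, $\tilde h_1 \leq \tilde h_2$ with strict inequality on a set of positive measure, and $\tilde u_1 \leq \tilde u_2$ is now given as part of the hypothesis rather than derived from $h_1 \geq 0$.

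Next I would repeat the linearization step from the proof of Theorem \ref{SCPfleqg}: set $w := \tilde u_2 - \tilde u_1 \geq 0$ and rewrite the difference of the two equations as
\begin{equation*}
-\operatorname{div}(A(x)\nabla w) - \lambda B(x)\, w \;\geq\; \tilde h_2 - \tilde h_1 \;\geq\; 0 \qquad \text{in } U_r,
\end{equation*}
for any annulus $U_r = B_R \setminus B_r$, with the matrix $A(x)$ and scalar $B(x)$ defined by the same integral formulas as before. The crucial observation is that uniform ellipticity of $A(x)$ on $U_r$ relies only on the radially decreasing character of $\tilde u_1, \tilde u_2$ (which forces $|\nabla \tilde u_i| \geq c_r > 0$ on $U_r$), and not on any sign condition on $h_1$. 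Since $\tilde h_1, \tilde h_2$ are continuous and $\tilde h_1 \not\equiv \tilde h_2$, one can pick $r_0 > 0$ so that $\tilde h_2 - \tilde h_1 \not\equiv 0$ on $U_{r_0}$; then the classical strong maximum principle (\cite[Theorem 2.5.2]{PS07}) applied to the linear operator $-\operatorname{div}(A\nabla\cdot) - \lambda B$ gives $w > 0$ on every $U_r$ with $r < r_0$.

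To propagate positivity into the inner ball $B_{r_0}$, I would use exactly the same ODE-uniqueness argument as in the proof of Theorem \ref{SCPfleqg}. Writing the radial problems as first-order systems with unknowns $(u_{i1}, u_{i2}) = (u_i, |u_i'|^{p-2}u_i' + |u_i'|^{q-2}u_i')$, if $w(r') = 0$ for some $r' < r_0$ then $w$ attains its interior minimum at $r'$, so $w'(r') = 0$ and the two Cauchy data coincide at $r_1 = r'$. Since $b(x,u) = c u^{-\delta}$ satisfies $0 \leq -\partial_u b \in L^{\infty}_{loc}((-R,R)\times(0,\infty))$ on any set bounded away from $u = 0$, \cite[Lemma 3.2]{CT00} forces $\tilde u_2 \leq \tilde u_1$ on $[r', R]$, contradicting $w > 0$ on $U_{r_0}$; notably this step depends only on the singular term and the strict positivity of $\tilde u_1, \tilde u_2$ near $r'$, not on any sign of $h_1$. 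Hence $\tilde u_1 < \tilde u_2$ in $B_R$, and Hopf's lemma (\cite[Theorem 2.7.1]{PS07}) applied to $w$ on $\partial B_R$ yields $\partial_\nu \tilde u_2 < \partial_\nu \tilde u_1 < 0$; translating back gives the claim on $B_R(x_0)$.

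The only potential obstacle is verifying that nothing in the original proof secretly used $h_1 \geq 0$ (which would pin down the strict sign of the right-hand side or the validity of the ODE comparison). The inspection above shows that $h_1 \geq 0$ was used in Theorem \ref{SCPfleqg} only to get $u_1 \leq u_2$ from scratch via the weak comparison principle; once $u_1 \leq u_2$ is given as a hypothesis, every remaining step is sign-agnostic in $h_1$. So the proof goes through verbatim after the translation.
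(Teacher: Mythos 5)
Your proposal is correct and matches the paper's intent exactly: the paper offers no separate proof of Theorem \ref{SCPfleqg1}, merely asserting that the argument for Theorem \ref{SCPfleqg} carries over, and your audit correctly identifies that the dropped hypothesis $h_1\geq 0$ entered only through the derivation of $u_1\leq u_2$, which is now assumed. The one imprecision — claiming that radial monotonicity alone forces $|\nabla \tilde u_i|\geq c_r>0$ on $U_r$, which is what uniform ellipticity of $A$ actually needs — is inherited from the paper's own proof, so your argument stands on the same footing as the original.
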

We conclude our manuscript by commenting on the range of $p$ and $q$ for which the SCP holds. 
  \begin{remark}
  We remark here that the assumption $1<p\leq q\leq 2$ is necessary in proving the SCP in the general setting. We draw the attention of the readers to the counterexample provided in \cite[Page no 4]{DIJ23} where we show that SCP is violated when $p=q>2$ and the domain is a open ball in $\mathbb{R}^N.$ 
  \end{remark}
  \bibliographystyle{plain}
 \bibliography{reference}
  
	\end{document}